   \def\MR#1{}
\newtheorem{Th}{Theorem}[section] 
\newtheorem{Lemma}[Th]{Lemma}
\newtheorem{Cor}[Th]{Corollary} 
\newtheorem{Prop}[Th]{Proposition}
\newtheorem{Conj}[Th]{Conjecture} 
\newtheorem{Def}[Th]{Definition} 
\newtheorem{Not}[Th]{Notation}
\newtheorem{Rem}[Th]{Remark}
\def\cC{\mathcal{C}} 
\def\cD{\mathcal D }
\def\cE{\mathcal E} 
\def\cF{\mathcal F}
\newcommand{\C}{\mathbb{C}} 
\newcommand{\D}{\mathbb{D}}
\newcommand{\bH}{\mathbb{H}}
\newcommand{\N}{\mathbb{N}}
\newcommand{\R}{\mathbb{R}}
\newcommand{\T}{\mathbb{T}}
\newcommand{\Z}{\mathbb{Z}}
\def\Aut{\mbox{Aut}}
\def\Coker{\mbox{Coker}}
\def\d{\mbox{d}}
 \def\Div{\mbox{Div}}
\def\Ker{\mbox{Ker}}
\def\Nef{\mbox{Nef}}
 \def\Psef{\mbox{Psef}}
\def\Supp{\mbox{Supp}}
\title{Positive currents on non-k\"ahlerian surfaces}
\author{Ionu\c t Chiose and Matei Toma}
\address{Ionu\c t Chiose, 
Institute of Mathematics of the Romanian Academy,
Bucharest, Romania}
\email{Ionut.Chiose@imar.ro}
\address{Matei Toma, 
Universit\'e de Lorraine, CNRS, IECL, F-54000 Nancy, France}
\email{Matei.Toma@univ-lorraine.fr}
 \date{\today}
\thanks{ AMS
  Classification (2020): 32J15; secondary: 32U40.}
\begin{document}

\begin{abstract}
We propose a classification of non-k\"ahlerian surfaces from a dynamical point of view and show how the known  non-k\"ahlerian surfaces fit into it. 
\end{abstract}
\maketitle

\noindent

\section{Introduction}

Since Kodaira's foundational work on the classification of compact complex surfaces, non-k\"ahlerian surfaces have been a subject of interest for many complex geometers. Beside the elliptic non-k\"ahlerian surfaces and the Hopf surfaces which were studied by Kodaira, two further series of examples appeared in the seventies: the Inoue surfaces \cite{Inoue} and the Kato surfaces \cite{Kato}. According to the Global Spherical Shell Conjecture \cite{NakamuraSugaku} these classes  should  exhaust all  non-k\"ahlerian compact complex surfaces up to bimeromorphic equivalence. Some recent progress towards a solution of this conjecture was achieved by Andrei Teleman in \cite{TelemanInventiones}, \cite{TelemanAnnals},  \cite{TelemanDonaldson}. His approach is to study a certain moduli space of stable rank two vector bundles on a given surface $X$ and deduce the existence of a compact analytic curve on $X$.  

In this paper we look at objects on $X$ of a different nature, namely at positive $\d$-exact currents. It is known by \cite{HarveyLawson} and \cite{LamariJMPA} that every non-k\"ahlerian surface admits non-trivial such currents. Extending our approach from \cite{ChiTo} we introduce an invariant $I(T)$ of a positive $\d$-exact current $T$ on a non-k\"ahlerian compact complex surface and investigate its behaviour for the known classes of surfaces. This analysis leads us to a rough classification of non-k\"ahlerian surfaces into {\it parabolic} and {\it hyperbolic} surfaces, see Definition \ref{def:hyperbolic-parabolic}. Note that the commonly used invariants such as the Kodaira dimension, the algebraic dimension or the K\"ahler rank do not adapt well to the historical partition of non-k\"ahlerian surfaces into elliptic, Hopf, Inoue and Kato surfaces, or to Kodaira's partition into classes. (An example is Kodaira's class $VII$ which was given a slightly restricted area in the monograph \cite{BHPV}.) We show that the results of Marco Brunella's papers \cite{BrunellaCylinder}, \cite{BrunellaGreen1}, \cite{BrunellaGreen2} fit perfectly into our classification. These papers were a source of motivation for our investigation and we therefore dedicate this work to the memory of Marco Brunella. 


\section{Preparations}\label{section:currents}

 \subsection{Positive pluriharmonic $(1,1)$-currents on non-k\"ahlerian surfaces}\label{subsection:ph}
 
In this section $X$ will always stand for a non-k\"ahlerian compact complex surface. It is known that  any compact complex  surface admits some Gauduchon metric, that is a hermitian metric whose associated K\"ahler form is $i\partial\bar\partial$-closed. We shall call such forms {\em Gauduchon forms} and we shall fix one Gauduchon form $\omega$ on $X$. We introduce the following definition following Lamari, \cite{LamariJMPA}.

\begin{Def}
A $(1,1)$-current on $X$ will be said to be {\em nef} if it is a weak limit of positive $i\partial\bar\partial$-closed $(1,1)$-forms on $X$ (or equivalently a weak limit of Gauduchon forms). 
\end{Def}

Nef currents are clearly positive and pluriharmonic, i.e. $i\partial\bar\partial$-closed. In the case of surfaces, extending the characterization of compact non-K\"ahler manifolds given by Harvey and Lawson in \cite{HarveyLawson}, Lamari shows that any non-k\"ahlerian surface admits some non-trivial  nef current which is $\d$-exact, \cite[Theorem 7.1]{LamariJMPA}. Since its evaluation on the Gauduchon form $\omega$ is positive, it follows that its Bott-Chern cohomology class is non-zero. Moreover, up to a positive multiplicative constant there is only one such class in $H^{1,1}_{BC}(X,\R)$.  In the sequel we shall denote by $\tau$ a smooth representative of such a class. We fix the class $\{\tau\}$ by requiring $\int_{X}\tau\wedge\omega=1$.

Note also that the intersection form on $H^{1,1}_{BC}(X,\R)$ is negative semi-definite with totally isotropic space spanned by the class of $\tau$, cf. e.g. \cite{LamariJMPA}.

 \begin{Prop}\label{prop:decomposition}
Let $T$ be a positive, $i\partial\bar\partial$-closed $(1,1)$-current on $X$. Then $T$ has a decomposition
\begin{equation}
T=\sum_jc_j[E_j]+T'
\end{equation}
where $c_j\geq 0$ are positive real numbers, $E_{j}$ are irreducible compact curves on $X$ and $T'$ is a nef current. 
\end{Prop}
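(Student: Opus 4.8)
The plan is to extract the curve part of $T$ by a Siu-type decomposition and then verify that the residual current is nef, using the special structure of $H^{1,1}_{BC}(X,\R)$ for a non-kählerian surface. First I would apply Siu's decomposition theorem (valid since $T$ is a closed positive current; here $i\partial\bar\partial T=0$ suffices for the local analysis, as the Lelong numbers and the structure of the upper-level sets $E_c(T)=\{x:\nu(T,x)\geq c\}$ are governed by the positivity of $T$ alone). On a surface the sets $E_c(T)$ are at most one-dimensional, so each positive-dimensional component is an irreducible compact curve $E_j$, and one gets $T=\sum_j c_j[E_j]+R$ with $c_j=\min_{x\in E_j}\nu(T,x)>0$ and $R$ a positive $i\partial\bar\partial$-closed current whose Lelong numbers vanish along every curve (more precisely, $R$ puts no mass on any curve and its $E_c(R)$ are discrete for $c>0$). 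Since only finitely many $E_j$ can occur with $c_j$ bounded away from $0$ — this follows from the fact that the classes $\{E_j\}$ lie in the negative semi-definite lattice $H^{1,1}_{BC}(X,\R)$ and $\sum_j c_j\int_{E_j}\omega\leq\int_X T\wedge\omega<\infty$ — the sum is finite. Set $T':=R$.

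Next I would show $T'$ is nef. The key input is the stated structure theorem: the intersection form on $H^{1,1}_{BC}(X,\R)$ is negative semi-definite with one-dimensional kernel spanned by $\{\tau\}$, and $\tau$ is represented by a $\d$-exact nef current with $\int_X\tau\wedge\omega=1$. I would argue that the Bott-Chern class $\{T'\}$ is a nonnegative multiple of $\{\tau\}$: indeed $\{T'\}^2\geq 0$ is forced because $T'$ carries no mass on curves, so by the general self-intersection inequality for positive $i\partial\bar\partial$-closed currents on surfaces (the regularization/mass-on-curves estimate à la Lamari--Buchdahl, or the argument in \cite{ChiTo}) one has $\{T'\}^2\geq -\sum_j(\text{something})=0$ once the curve part has been removed; combined with negative semi-definiteness this forces $\{T'\}$ to be isotropic, hence $\{T'\}=\lambda\{\tau\}$ with $\lambda=\int_X T'\wedge\omega\geq 0$. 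Then $T'$ and $\lambda\tau$ are two positive (resp. smooth) representatives of the same Bott-Chern class, and I would invoke Lamari's characterization (\cite[Theorem 7.1]{LamariJMPA} and the surrounding duality, essentially Hahn-Banach applied to the cone of Gauduchon forms against $\d$-exact currents) to conclude that any positive $i\partial\bar\partial$-closed current in the ray $\R_{\geq 0}\{\tau\}$ is a weak limit of Gauduchon forms, i.e. nef.

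The main obstacle is the second step: showing that once the divisorial part is stripped off, the residual class is proportional to $\{\tau\}$ and that the residual current is actually a \emph{limit of Gauduchon forms} rather than merely a positive current in the nef ray. The self-intersection inequality $\{T'\}^2\geq 0$ needs the vanishing of Lelong numbers along curves together with a regularization argument on a possibly non-kählerian surface — this is where the Gauduchon metric $\omega$ and the Bott-Chern machinery must be used carefully, since one cannot appeal to Demailly regularization in a Kähler class. I expect the cleanest route is to reduce to the minimal model (blow-downs only decrease the number of $E_j$ and behave well under pushforward of currents) and there quote the relevant estimate from \cite{LamariJMPA} or \cite{ChiTo} verbatim; the finiteness of the sum and the identification $c_j=\min_{E_j}\nu(T,\cdot)$ are then routine.
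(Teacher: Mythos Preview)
Your second step has a genuine gap. The current $T$, and hence the residual $T'$, is only $i\partial\bar\partial$-closed, not $\d$-closed, so it carries an \emph{Aeppli} class, not a Bott--Chern class. The negative semi-definiteness you quote is a statement about the intersection form on $H^{1,1}_{BC}(X,\R)$; there is no self-intersection pairing on $H^{1,1}_A(X,\R)$ to which you can apply the argument ``$\{T'\}^2\ge0$ forces isotropy''. Even if you could place the Aeppli class of $T'$ in some nef ray, it would not follow that $T'$ \emph{itself} is a weak limit of Gauduchon forms: being a positive representative of a nef class is not the same as being a nef current. (The implication ``nef class $\Rightarrow$ nef current'' appears in the paper as Proposition~\ref{prop:psef-nef}(4), but only for $\d$-closed currents, and its proof already uses the present proposition.) A smaller issue: your claim that the sum $\sum_j c_j[E_j]$ is finite fails on elliptic surfaces, where the irreducible curves are fibres, there are infinitely many of them, and their classes all lie in the kernel of the intersection form, so neither of your two finiteness arguments applies.

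The paper bypasses cohomology entirely. After extracting the divisorial part (via Bassanelli's cut-off theorem in the non-elliptic case, and a direct volume argument in the elliptic case, so that $\chi_E T'=0$ for every curve $E$), it proves $T'\in\overline{\mathcal G}$ by Hahn--Banach: if not, a smooth form $\theta$ separates $\R T'$ from the Gauduchon cone, Lamari's lemma produces a quasi-psh $\varphi$ with $\theta+i\partial\bar\partial\varphi\ge\varepsilon_0\omega$, Demailly--Paun regularization replaces $\varphi$ by a $\varphi'$ with analytic polar set $E_+$, and then Lamari's Corollaire~3.2 together with $\chi_{E_+}T'=0$ forces $\langle\omega,T'\rangle=0$, hence $T'=0$. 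The key point you are missing is that the condition $\chi_E T'=0$ for all curves $E$ is precisely what is needed to make the regularization-and-separation argument go through; no cohomological identification of $\{T'\}$ is required or even available.
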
 
 \begin{proof} 
 If $X$ is non-elliptic, then there are finitely many compact curves $E_j$ on $X$ and from Theorem 4.10 in \cite{Bassanelli} it follows that $T$ can be written 
\begin{equation}
T=\sum_jc_j[E_j]+T'
\end{equation}
where $\chi_{E_j}T'=0,\forall j$. 
If $X$ is elliptic, i.e., if there exists a non-constant map $\pi:X\to Y$ to a compact complex curve $Y$, denote by ${\mathcal C}$ the set of all compact complex curves in $X$. 
If $\omega$ is a fixed Gauduchon form on $X$, then there exists $c>0$ such that $\int_E\omega\geq c$, $\forall E\in {\mathcal C}$, see Remark \ref{rem:lower-bound}. 
Now if $n\in{\mathbb N}$, denote by $$\displaystyle{\mathcal C}_n=\left\{E\in {\mathcal C}\big\vert \chi_ET\geq\frac 1n[E]\right\}.$$ 
We claim that ${\mathcal C}_n$ is finite.
Indeed, we have $\displaystyle{T\geq \sum_{E\in {\mathcal C}_n}\chi_ET\geq \sum_{E\in {\mathcal C}_n}\frac 1n[E]}$ and therefore 
$$\int_X\omega\wedge T\geq \sum_{E\in {\mathcal C}_n}\frac 1n \int_E\omega\geq \frac 1n\cdot c \cdot{\rm card}\,\,{\mathcal C}_n.$$ 
Denote by $T_n$ the $d$-closed current $\displaystyle{\sum_{E\in {\mathcal C}_n}\chi_ET}$. Clearly ${\mathcal C}_n\subset {\mathcal C}_{n+1}$, and therefore $T_{n+1}\geq T_n$. It implies that the weak limit of $(T_n)_n$ is a current of the form $\displaystyle{\sum_jc_j[E_j]}$, where $c_j>0$ and $E_j$ are compact curves in $X$. It is a $d$-closed current, and $\displaystyle{T':=T-\sum_jc_j[E_j]}$ is a positive $i\partial\bar\partial$-closed current. 
From the construction of $\displaystyle{\sum_jc_j[E_j]}$, it follows that $\chi_ET'=0$, $\forall E\in {\mathcal C}$.

Therefore, on any non-K\"ahler compact surface, the positive $i\partial\bar\partial$-closed $(1,1)$-currents admit a Siu decomposition.

We have to prove that $T'$ is a nef current, i.e., that it belongs to $\overline{\mathcal G}$, the weak closure
of the cone of Gauduchon metrics ${\mathcal G}$ in ${\mathcal D}'^{1,1}(X,{\mathbb R})$ the space of $(1,1)$-forms with distribution coefficients.

Suppose that $T'\notin \overline{\mathcal G}$; then let $K=\{G\in \overline{\mathcal G}\vert \langle \omega, G\rangle=1\}$ where $\omega$ is our fixed Gauduchon form
 and $L={\mathbb R}T'\subset {\mathcal D}'^{1,1}(X,{\mathbb R})$. 
Since $L\cap K=\emptyset$, $K$ is weakly compact and $L$ is closed, they can 
be separated by a ${\mathcal C}^{\infty}$ $(1,1)$-form $\theta$ such that 
$\langle \theta , G\rangle \geq \varepsilon_0>0, \forall G\in K$ and 
$\langle \theta, G\rangle\leq 0,\forall G\in L$. We obtain 
$\langle \theta, T'\rangle =0$ and from Lemme 1.4 in \cite{LamariJMPA} that there exists $\varphi$ a distribution such that 
\begin{equation}
\theta+i\partial\bar\partial \varphi\geq\varepsilon_0\omega.
\end{equation}
It follows that $\varphi$ is actually quasi-plurisubhamonic, and from the regularization Theorem 3.2 in \cite{DemaillyPaun}, we can approximate $\varphi$ with another quasi-plurisubharmonic function 
$\varphi'$ which has logarithmic poles (in particular the set 
$E_+=\{x\in X\vert \nu(\varphi', x)>0\}$ is an analytic subset of $X$), and such that
\begin{equation}
i\partial\bar\partial \varphi'\geq\frac{\varepsilon_0}{2}\omega-\theta.
\end{equation} 
Apply Corollaire 3.2 in \cite{LamariJMPA} with $\alpha=0$, $Y=E_+$ and 
\begin{equation}
\gamma=\frac{\varepsilon_0}{2}\omega-\theta.
\end{equation}
 Since $\chi_{E_+}T'=0$, it follows that 
\begin{equation}
0=\langle 0, T'\rangle\geq \frac{\varepsilon_0}{2}\langle \omega,T'\rangle-\langle\theta, T'\rangle=\frac{\varepsilon_0}{2}\langle \omega, T'\rangle
\end{equation}
Hence $T'=0$, contradiction.
 \end{proof}
 
In the above proof we made use of the following 

\begin{Rem}\label{rem:lower-bound}  If $(X,\omega)$ is an $n$-dimensional compact complex manifold endowed with a Gauduchon metric, then there is a constant $c>0$ such that for any positive divisor $E$ on $X$ we have
$$ \int_E\omega^{n-1}\ge c.$$
This follows as in   \cite[p. 4]{TomaCriteria} from the fact that the volume function with respect to $\omega$ is pluriharmonic on the cycle space of codimension one cycles, \cite[Proposition 1]{BarletConvexitate}, combined with the fact that the set of all cycles whose volume is bounded from above  by some constant $M$ is compact. 
\end{Rem}
 
 \begin{Prop}
 \label{prop:pluriharmonic} Let $T$ be a positive $i\partial\bar\partial$-closed $(1,1)$-current such that $\int_X\tau\wedge T=0$. Then $T$ is closed. If, moreover, $T$ is nef, then it is $d$-exact.
\end{Prop}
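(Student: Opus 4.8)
The plan is to exploit the fact that the intersection form on $H^{1,1}_{BC}(X,\R)$ is negative semi-definite with radical spanned by $\{\tau\}$, together with the Siu-type decomposition from Proposition \ref{prop:decomposition}. First I would write the Bott–Chern class of $T$ as $\{T\}$ and observe that the hypothesis $\int_X\tau\wedge T=0$ says precisely that $\{T\}$ is orthogonal to $\{\tau\}$ with respect to the intersection pairing. Since $\{\tau\}$ spans the radical, a class orthogonal to the radical need not itself be isotropic, so one cannot immediately conclude; instead I would use positivity. Applying Proposition \ref{prop:decomposition} write $T=\sum_j c_j[E_j]+T'$ with $T'$ nef and $c_j\ge 0$. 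Pairing with $\tau$ and using that $\tau$ is a smooth representative of the nef $\d$-exact class, we get $\int_X\tau\wedge T'\ge 0$ and $c_j\int_X\tau\wedge[E_j]=c_j\int_{E_j}\tau\ge 0$; since the total is zero, each term vanishes. In particular $\int_X\tau\wedge T'=0$ for the nef part.

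Next I would treat the nef part. A nef current $T'$ is a weak limit of Gauduchon forms $\omega_k$, so its class lies in the closure of the Gauduchon cone. I claim $\{T'\}^2=0$ in $H^{1,1}_{BC}$: indeed by negative semi-definiteness $\{T'\}^2\le 0$, while one can bound it below using the Gauduchon approximation and the fact that $\{T'\}\cdot\{\tau\}=0$ forces $\{T'\}$ into the isotropic line (any class pairing to zero with a nef limit-of-Gauduchon class and lying in the nef cone, on a space where the form is negative semi-definite with one-dimensional radical, must be proportional to $\tau$ — this uses that the restriction of the form to the orthogonal complement of a vector pairing nontrivially is negative \emph{definite}, but $\{T'\}$ pairs to zero with $\{\omega\}$-type classes only in the limit). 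More carefully: take any Gauduchon form $\sigma$; then $\{T'\}\cdot\{\sigma\}=\lim\int_X\omega_k\wedge\sigma\ge c>0$ unless $T'=0$, by Remark \ref{rem:lower-bound}-type reasoning — no, rather by the fact that the volumes $\int_X\omega_k\wedge\sigma$ are bounded below only if $\{T'\}\ne 0$. Hmm — the clean route is: since the intersection form is negative semi-definite with radical $\R\{\tau\}$, and $\{T'\}$ is a limit of Gauduchon classes each of which pairs positively with $\{\tau\}$... actually each Gauduchon class has \emph{zero} self-intersection is false. Let me instead argue that $\{T'\}\cdot\{\tau\}=0$ together with $\{\tau\}$ spanning the radical and the semi-definiteness gives, by Cauchy–Schwarz for the semi-definite form, that $\{T'\}$ is in the radical, hence $\{T'\}=\lambda\{\tau\}$ for some $\lambda\ge 0$.

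Once $\{T'\}=\lambda\{\tau\}$, the class $\{T'\}$ is represented by the nef $\d$-exact form $\lambda\tau$, so $T'-\lambda\tau=i\partial\bar\partial u$ for some current $u$; but then $dT'=d(i\partial\bar\partial u)+\lambda\,d\tau$. Since $\tau$ is $\d$-exact it is in particular $d$-closed, and $i\partial\bar\partial u$ is $d$-closed, so $T'$ is $d$-closed. Combined with the fact that each $[E_j]$ is closed, $T=\sum_j c_j[E_j]+T'$ is closed, proving the first assertion. For the second assertion, if $T$ itself is nef then in the decomposition the curve terms must again satisfy $c_j\int_{E_j}\tau=0$, but more directly: $T$ nef means $\{T\}$ is a limit of Gauduchon classes, each pairing positively with $\{\tau\}$; the hypothesis forces $\{T\}\cdot\{\tau\}=0$, and by the same semi-definiteness argument $\{T\}\in\R\{\tau\}$, i.e. $\{T\}=\mu\{\tau\}$ for some $\mu\ge 0$. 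Since $\tau$ is $\d$-exact, $\mu\tau$ is $\d$-exact, and $T-\mu\tau=i\partial\bar\partial v$; a pluriharmonic current that is $i\partial\bar\partial$ of a current need not be $\partial\bar\partial$-exact in the naive sense, but here $T=\mu\tau+i\partial\bar\partial v$ with $\mu\tau\in\im\partial$, and $i\partial\bar\partial v=\partial(i\bar\partial v)\in\im\partial$, so $T$ is $\partial$-exact; being also real (a $(1,1)$-current), $T$ is $d$-exact. The main obstacle I anticipate is the Bott–Chern bookkeeping needed to pass from ``$\{T\}$ is proportional to $\{\tau\}$ in $H^{1,1}_{BC}$'' to the concrete statement that $T$ differs from a multiple of the $\d$-exact form $\tau$ by an $i\partial\bar\partial$-exact current, and then extracting $d$-closedness and $d$-exactness cleanly — one must be careful that $H^{1,1}_{BC}$ classes of currents and of smooth forms agree, which holds by the standard regularization for Bott–Chern cohomology of currents, and that the Cauchy–Schwarz step for the semi-definite intersection form applies to the classes of $T$, $T'$, which are genuine Bott–Chern classes because $T$, $T'$ are $i\partial\bar\partial$-closed.
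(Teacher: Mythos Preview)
There is a genuine gap, and it stems from a confusion between Bott--Chern and Aeppli cohomology. A current that is merely $i\partial\bar\partial$-closed defines a class in $H^{1,1}_{A}(X,\R)$, \emph{not} in $H^{1,1}_{BC}(X,\R)$; only $d$-closed currents have Bott--Chern classes. So when you write ``the Bott--Chern class of $T$'' or claim that ``$T$, $T'$ are genuine Bott--Chern classes because $T$, $T'$ are $i\partial\bar\partial$-closed'', you are assuming exactly what you are trying to prove. The intersection form you want to use lives on $H^{1,1}_{BC}$, and you have no right to place $T'$ there until you have shown it is $d$-closed. The hypothesis $\int_X\tau\wedge T=0$ is the duality pairing $\langle\{\tau\}_{BC},\{T\}_A\rangle$, not an intersection number in $H^{1,1}_{BC}$.

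Even setting this aside, the ``Cauchy--Schwarz'' step is vacuous. Since $\{\tau\}$ spans the \emph{radical} of the (negative semi-definite) intersection form on $H^{1,1}_{BC}$, one has $\{\alpha\}\cdot\{\tau\}=0$ for \emph{every} class $\{\alpha\}\in H^{1,1}_{BC}$; in particular, if $T'$ were $d$-closed the condition $\{T'\}_{BC}\cdot\{\tau\}_{BC}=0$ would hold automatically and could not force $\{T'\}_{BC}\in\R\{\tau\}$. (Equivalently: $\tau$ is $d$-exact, so $\int_X\tau\wedge S=0$ for any $d$-closed $S$.) What is actually needed, and what the paper does for the second assertion, is to show $\{T\}_{BC}^2=0$; this uses nef-ness in an essential way, namely $\int_X\alpha^2=\lim_n\int_X T\wedge\omega_n\ge 0$, and then the isotropy of $\{T\}_{BC}$ gives $\{T\}_{BC}\in\R\{\tau\}$. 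For the first assertion the paper bypasses cohomology altogether: writing $T'=\lim\omega_n$ with $\omega_n=\varepsilon_n\omega+\alpha_n+\partial\bar\sigma_n+\bar\partial\sigma_n$ and $\varepsilon_n=\int_X\tau\wedge\omega_n\to 0$, an explicit computation of $\int_X\alpha_n^2$ forces $\partial\sigma_n\to 0$, whence $\partial T'=\lim\partial\omega_n=0$. Your outline is missing this analytic step, and the cohomological shortcut you propose does not exist.
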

 \begin{proof}
 Since $\int_X\tau\wedge T=0$, it follows that $\int_X\tau\wedge T'=0$, where $T'$ is the nef current that appears in the previous Proposition \ref{prop:decomposition}. Thus $T'$ is a weak limit of Gauduchon forms $T'=\lim \omega_n$ and each $\omega_n$ can be written
 \begin{equation}\label{gauduchon}
 \omega_n=\varepsilon_n \omega+\alpha_n+\partial\bar\sigma_n+\bar\partial\sigma_n,
 \end{equation}
 where 
\begin{equation}
\varepsilon_n=\int_X\tau\wedge \omega_n\to\int_X\tau\wedge T'=0,
 \end{equation}
 $\alpha_n$ are $d$-closed $(1,1)$-forms, and $\sigma_n$ are $(1,0)$-forms. Then
 \begin{equation}
 0\geq\int_X\alpha^2_n=\int_X(\alpha_n+d(\sigma_n+\bar\sigma_n))^2=\int_X(\omega_n-\varepsilon_n\omega+\partial\sigma_n+\bar\partial\bar\sigma_n)^2=
 \end{equation}
 \begin{equation*}
 =\int_X(\omega_n-\varepsilon_n\omega)^2+2\int_X\partial\sigma_n\wedge\bar\partial\bar\sigma_n=\int_X\omega_n^2-2\varepsilon_n\int_X\omega_n\wedge \omega+\varepsilon_n^2\int_X\omega^2+2\int_X\partial\sigma_n\wedge\bar\partial\bar\sigma_n\geq
 \end{equation*}
 \begin{equation*}
 \geq -2\varepsilon_n\int_X\omega_n\wedge \omega+\varepsilon_n^2\int_X\omega^2+2\int_X\partial\sigma_n\wedge\bar\partial\bar\sigma_n\geq
 \end{equation*}
 Since $\int_X \omega_n\wedge \omega\to\int_XT'\wedge \omega$ and $\varepsilon_n\to 0$, it follows that 
 \begin{equation}
 \int_X\partial\sigma_n\wedge\bar\partial\bar\sigma_n\to 0
 \end{equation}
 and therefore $\partial\sigma_n\to 0$ weakly. So from \eqref{gauduchon}
 \begin{equation}
 \partial T'=\lim \partial \omega_n=\lim (\varepsilon_n \partial \omega+\partial\bar\partial\sigma_n)=-\lim\bar\partial\partial\sigma_n=0,
 \end{equation}
 therefore $T'$ is closed and hence $T$ is closed as well.
 
 If $T$ is nef and closed, let $\alpha$ be a ${\mathcal C}^{\infty}$ representative of $T$ in the Bott-Chern cohomology class of $T$, i.e.,
$T=\alpha+i\partial\bar\partial\varphi$ where $\varphi$ is a quasi-plurisubharmonic function on $X$.
 If $T=\lim \omega_n$, where $\omega_n$ are Gauduchon forms, then
 \begin{equation}
 0\geq\int_X\alpha^2=\lim\int_X\alpha\wedge \omega_n=\lim\int_XT\wedge \omega_n\geq 0
 \end{equation}
 so $\int_X\alpha^2=0$ and $\alpha$ is $d$-exact and therefore $T$ is $d$-exact. We have used the fact that the intersection form on $H^{1,1}_{BC}(X,\R)$ is negative semi-definite with totally isotropic space spanned by the class of $\tau$. 
 \end{proof}
 
 \subsection{Positive exact  $(1,1)$-currents in $L^{2}_{-1}(X)$}\label{subsection:I}
We shall denote by $L^{2}(X)$ and by $L^{2}_{-1}(X)$ spaces of currents with coefficients in the corresponding spaces of functions without making their degrees precise. A closed positive current of bidegree $(1,1)$  is  in $L^{2}_{-1}(X)$ if it admits local $\partial\bar\partial$-potentials which are square integrable along with their gradients.
 
Bedford and Taylor defined in \cite{BedfordTaylor} the self intersection of 
 a closed positive $(1,1)$ current $T$ in $L^{2}_{-1}(X)$ 
  as follows: if $T=i\partial\bar\partial u$ on some open subset $U$ of $X$ and if $\psi $ is a test function on $U$ , then $$\int \psi T\wedge T=-\int i\partial\bar\partial\psi\wedge i \partial u\wedge\bar\partial u.$$
A direct computation shows that this definition does not depend on the chosen $i\partial\bar\partial$-potential $u$ and the definition is extended by linearity to define a current on $X$. By \cite[Theorem 3.6]{BedfordTaylor}  $T\wedge T$ is a positive $(2,2)$-current on $X$. This may also be seen in the following way. Let $\Omega$ be an open subset of $\C^{2}$. For a plurisubharmonic function $u$ in $L^{2}_{1}(\Omega)$ we define a distribution $MA(u)$ on $\Omega$ by setting
 \begin{equation}
 MA(u)(\psi):=-\int i\partial\bar\partial\psi\wedge i \partial u\wedge\bar\partial u.
 \end{equation}
We regularize $u$ in the usual way by means of a  sequence of regularizing kernels $(\rho_{\epsilon})_{\epsilon}$  converging to the Dirac distribution. The sequence of functions $u_{\epsilon}:=u\star \rho_{\epsilon}$ decreases towards $u$. The functions $u_{\epsilon}$  are in $\cC^{\infty}(\Omega)$ and plurisubharmonic on the smaller open sets $\Omega_{\epsilon}$. By the Meyers-Serrin theorem we also have $\lim_{\epsilon\to0} u_{\epsilon}=u$ in $L^{2}_{1}(\Omega)$. Thus if $\psi $ is a test function on $\Omega$, then $\Supp(\psi)\subset \Omega_{\epsilon} $ for $0<\epsilon<<1$ and $\lim_{\epsilon\to0} MA(u_{\epsilon})(\psi)=MA(u)(\psi)$ and on the other hand
$$
MA(u_{\epsilon})(\psi):=-\int_{\Omega} i\partial\bar\partial\psi\wedge i \partial u_{\epsilon}\wedge\bar\partial u_{\epsilon}=\int_{\Omega} \psi(i\partial\bar\partial u_{\epsilon})^{2}$$
which will be positive if $\psi$ is positive.

If $T$ is an exact positive $(1,1)$ current, 
then there exists  a bidegree $(0,1)$ current $S$ 
such that $T=\partial S$. We investigate the situation when $T$  is in $L^{2}_{-1}(X)$. 

  \begin{Prop}\label{prop:S}
 Let $T$ be a positive $d$-exact current of bidegree $(1,1)$ in $L^{2}_{-1}(X)$ and let $T=\partial S$  for some  bidegree $(0,1)$-current $S$ in $L^2(X)$. Then $i\bar S\wedge S$ is $i\partial\bar\partial$-closed, 
and $\chi_Yi\bar S\wedge S=0$ for any compact analytic subset $Y$ of $X$. In particular, $i\bar S\wedge S$ is a nef pluriharmonic current. 

Moreover the value of the integral 
$$\int_{X}\tau\wedge i\bar S\wedge S$$
depends only on $T$ and not on the chosen primitive current $S$.
 \end{Prop}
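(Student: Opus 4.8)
My plan is to unpack the two hypotheses on $S$ and $T$ into usable identities and then read off the three assertions in order.

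\emph{Step 1: preliminary identities.} Since $T=\partial S$ is $d$-exact it is $d$-closed, hence $0=\bar\partial T=\bar\partial\partial S=-\partial\bar\partial S$. Thus $\partial\bar S=\overline{\bar\partial S}$ is a $\bar\partial$-closed $(2,0)$-current on the surface $X$, i.e.\ a holomorphic $2$-form $\Omega$, so that $\bar\partial S=\bar\Omega$. Using $\partial\bar\Omega=0$ one computes $\Omega\wedge\bar\Omega=\partial(\bar S\wedge\bar\Omega)$, whence $\int_X\Omega\wedge\bar\Omega=\int_X d(\bar S\wedge\bar\Omega)=0$; as $\Omega\wedge\bar\Omega\geq 0$ this forces $\Omega=0$, that is
\begin{equation*}
\bar\partial S=0,\qquad\partial\bar S=0,\qquad d\bar S=\bar\partial\bar S=\overline{\partial S}=T .
\end{equation*}
Moreover, since $T$ is $d$-exact and lies in $L^2_{-1}(X)$ one has $\int_X T\wedge T=0$ (e.g.\ by regularisation: the Bedford--Taylor self-intersection is weakly continuous and the regularisations, being $d$-exact smooth forms, have vanishing self-intersection), so $T\wedge T=0$ by \cite[Theorem 3.6]{BedfordTaylor}.

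\emph{Step 2: pluriharmonicity, no mass on analytic sets, nefness.} From $d\bar S=T$ and $\bar\partial S=0$ a direct computation gives $\bar\partial(i\bar S\wedge S)=iT\wedge S$, and then, since $\partial T=0$ and $T\wedge T=0$,
\begin{equation*}
i\partial\bar\partial(i\bar S\wedge S)=-\,T\wedge\partial S=-\,T\wedge T=0 ,
\end{equation*}
so $i\bar S\wedge S$ is pluriharmonic (to be rigorous one either works inside the Bedford--Taylor calculus or passes to local $\partial\bar\partial$-potentials $u$ of $T$, writing $i\bar S\wedge S=i\partial u\wedge\bar\partial u$ modulo terms killed by $i\partial\bar\partial$). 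It is positive, being equal to $i\bar S\wedge\overline{\bar S}$. Next, $S\in L^2(X)$, so the coefficients of $i\bar S\wedge S$ are products of two $L^2$-functions, hence $L^1$; therefore $i\bar S\wedge S$ is absolutely continuous with respect to Lebesgue measure, and $\chi_Y(i\bar S\wedge S)=0$ for every proper analytic subset $Y$ of $X$. Feeding this into Proposition \ref{prop:decomposition}, the curve part of $i\bar S\wedge S$ vanishes and $i\bar S\wedge S$ coincides with the nef current $T'$ of that decomposition; this settles all but the last assertion.

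\emph{Step 3: independence of the primitive.} Let $S_1,S_2$ be two primitives as in the statement and set $R:=S_1-S_2$. Then $\partial R=0$, and $\bar\partial R=0$ by Step 1 applied to $S_1$ and $S_2$; hence $\overline R$ is a $d$-closed $(1,0)$-current, i.e.\ a closed holomorphic $1$-form $\eta$, and $R=\bar\eta$ is smooth. Expanding
\begin{equation*}
i\bar S_1\wedge S_1-i\bar S_2\wedge S_2=i\bar S_2\wedge R+i\bar R\wedge S_2+i\bar R\wedge R ,
\end{equation*}
it suffices to show $\tau$ annihilates each summand. For the last one: $i\bar R\wedge R=i\eta\wedge\bar\eta$ has zero self-intersection (because $\eta$ is a $1$-form), so its Bott--Chern class is isotropic for the intersection form on $H^{1,1}_{BC}(X,\R)$; as that form is negative semi-definite with radical $\R\{\tau\}$, this class lies in $\R\{\tau\}$, whence $\int_X\tau\wedge i\eta\wedge\bar\eta=0$. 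For the first summand I would fix a smooth real $1$-form $\gamma$ with $d\gamma=\tau$ (such a $\gamma$ exists: the Bott--Chern class of $\tau$ contains $d$-exact currents, so $\tau$ is itself $d$-exact); then $\partial\gamma^{1,0}=0$, and two applications of Stokes' theorem give
\begin{equation*}
\int_X\tau\wedge i\bar S_2\wedge R=\int_X\gamma\wedge iT\wedge R=i\int_X\gamma^{1,0}\wedge\partial S_2\wedge R=0 ,
\end{equation*}
the first equality using $dR=0$ and $d\bar S_2=T$, the second that only $\gamma^{1,0}$ contributes and $T=\partial S_2$, and the last that $\gamma^{1,0}\wedge\partial S_2\wedge R=-\partial(\gamma^{1,0}\wedge S_2\wedge R)$ is $\partial$-exact (since $\partial\gamma^{1,0}=0=\partial R$), hence of vanishing integral. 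The middle summand is the complex conjugate of the first because $\tau$ is real, so it too is annihilated by $\tau$. Therefore $\int_X\tau\wedge i\bar S_1\wedge S_1=\int_X\tau\wedge i\bar S_2\wedge S_2$.

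\emph{Where the difficulty lies.} All the formal Leibniz/Stokes manipulations must be legitimised inside the Bedford--Taylor framework for currents in $L^2_{-1}(X)$ and $L^2(X)$ (the products $T\wedge S$, $T\wedge T$, $\bar S\wedge\bar\Omega$, $\gamma^{1,0}\wedge S_2\wedge R$ either pair such a current with smooth forms or fall under the cases treated in \cite{BedfordTaylor}); this is routine but must be carried out carefully. The two substantive points are the vanishing $T\wedge T=0$ (obtained from $d$-exactness by approximation together with positivity) and, in Step 3, killing the ambiguity $S\mapsto S+\bar\eta$ produced by closed holomorphic $1$-forms via the isotropy of the Bott--Chern intersection form; I expect this last point to be the real heart of the argument.
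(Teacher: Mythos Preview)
Your argument is correct, but the route in Step~3 differs from the paper's and is noticeably longer. Having observed (as you do) that $\{i\eta\wedge\bar\eta\}_{BC}$ lies in $\R\{\tau\}_{BC}$, the paper simply \emph{replaces} $\tau$ by $c\,i\eta\wedge\bar\eta$ in the integral $\int_X\tau\wedge i\bar S_1\wedge S_1$; this is legitimate because $i\bar S_j\wedge S_j$ is $i\partial\bar\partial$-closed, so the integral depends only on $\{\tau\}_{BC}$. After this substitution the three cross terms in the expansion of $i(\bar S_2+\eta)\wedge(S_2+\bar\eta)$ die \emph{pointwise}, since each contains $\eta\wedge\eta$ or $\bar\eta\wedge\bar\eta$, and only $c\int_X i\eta\wedge\bar\eta\wedge i\bar S_2\wedge S_2=\int_X\tau\wedge i\bar S_2\wedge S_2$ survives. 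Your approach via a smooth primitive $\tau=d\gamma$ and two Stokes integrations is valid and perhaps more robust (it does not use the specific identification of the Bott--Chern radical), but the paper's trick is a one-liner. Your Step~1, proving $\bar\partial S=0$ globally, is not in the paper at all: there the authors pass directly to local potentials $T=i\partial\bar\partial u$, $S=i\bar\partial u$ and read off the identity $\langle\psi,T\wedge T\rangle=-\langle i\partial\bar\partial\psi,\,i\bar S\wedge S\rangle$ from the Bedford--Taylor definition, getting $T\wedge T=0$ and pluriharmonicity of $i\bar S\wedge S$ by evaluating at $\psi=1$ and using positivity. Your global vanishing $\bar\partial S=0$ in fact justifies why one may write $S=i\bar\partial u$ locally, so it usefully supplements the paper's more informal treatment of that point.
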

 \begin{proof}
 Locally we may write $T=i\partial\bar\partial u$ and $S=i\bar\partial u$. It follows that: $\int \psi T\wedge T=-\int i\partial\bar\partial\psi\wedge i \bar S\wedge S$ for any $\cC^{\infty}$ function $\psi$ on $X$ and in particular estimating on $\psi=1$ one gets $T\wedge T=0$ and $i\partial\bar\partial(i\bar S\wedge S)=0$. 
 
 If $\dim Y=0$, the statement on the vanishing of $\chi_Yi\bar S\wedge S$ is well-known. If $\dim Y=1$, the statement follows from the fact that $i\bar S\wedge S$ has $L^1$ coefficients, and a $L^1$ function cannot dominate a Dirac measure. 
 
If $S_{1},$ $S_{2}$ are two primitive currents for $T$ as above, then $\eta:=\bar S_{1}-\bar S_{2}$ is a holomorphic $1$-form on $X$. If this form is non-zero then $i\eta\wedge\bar\eta$ is a non-trivial closed positive $(1,1)$-form such that $\int_{X}(i\eta\wedge\bar\eta)^{2}=0$ hence as remarked in Section \ref{subsection:ph}  $\{\tau\}=c\{i\eta\wedge\bar\eta\}\in H^{1,1}_{BC}(X,\R)$ for some positive constant $c$. Thus
$$\int_{X}\tau\wedge i\bar S_{1}\wedge S_{1}=c\int_{X}i\eta\wedge\bar\eta\wedge i(\bar S_{2}+\eta)\wedge (S_{2}+\bar\eta)=\int_{X}\tau\wedge i\bar S_{2}\wedge S_{2}. $$
 \end{proof}
 
 \begin{Not}\label{notation:I}
 Under the above  assumptions we shall use the following notation for the integral appearing in Proposition \ref{prop:S}
 $$I(T):=\int_{X}\tau\wedge i\bar S\wedge S.$$
 \end{Not}

 \subsection{Green functions}

To our knowledge the notion of Green function for a non-k\"ahlerian surface appears first in the paper \cite{DO}. It was further used in \cite{BrunellaGreen1}  and  in \cite{BrunellaGreen2}.

\begin{Def}\label{def:Green function} 
We say that a   
compact complex surface $X$ admits a  {\em Green function} if there exist a $\Z$-covering $\pi:X'\to X$, a divisor $D\ge0$ on $X$ and a negative plurisubharmonic function $G:X'\to]-\infty,0[$ which is multiplicatively automorphic on $X'$ and pluriharmonic on $X'\setminus\pi^{-1}(D)$. Being {\em multiplicatively automorphic} for $G$ means that if $g\in\Aut(X')$ generates the deck transformation group of 
$\pi:X'\to X$, there exists a positive constant $k$ such that $G\circ g=kG$. We will always implicitely assume that Green functions are non-trivial in the sense that $X'$ is connected and that $k\neq1$. By interchanging $g$ and $g^{-1}$ we may further assume that $k<1$.
\end{Def}

\begin{Prop}\label{prop:Green} 
If $(\pi,D,G)$ is data defining a Green function on a compact complex surface $X$ and if $u:=-\log(-G)$, then the following assertions hold:
\begin{enumerate}
\item $u$ is plurisubharmonic and {\em additively automorphic}. 
The additive automorphy for $u$ means that $u\circ g=u+p$, where $p:=-\log k$.
\item $i\partial\bar\partial u$ defines a non-trivial exact positive current on $X$ and in particular $X$ is non-k\"ahlerian.
\item $X$ is non-elliptic.
\item $i\partial\bar\partial G=\sum_j a_j[D_j]$, where $D_j$ are the irreducible components of  $\pi^{-1}(D)$ and $a_j$ are non-negative constants.
\item $u$ is in $L^2_{1,loc}(X')$ and
 $$i\partial\bar\partial u=i\partial u\wedge\bar\partial u.$$
 \item $I(i\partial\bar\partial u)=0$.
 \item For any continuous $p$-periodic function $h:\R\to\R$ satisfying $1+h'+h''\ge0$ as distributions, the function 
 $v:=u+h\circ u$, understood as being $-\infty$ on the polar locus of $u$, is plurisubharmonic, additive automorphic and defines an exact positive $(1,1)$-current $T:=i\partial\bar\partial v\in L^2_{-1}(X)$ with $I(T)=0$.
\end{enumerate}
\end{Prop}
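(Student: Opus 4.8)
The plan is to prove the seven assertions in an order in which the elementary ones support the delicate ones, the key technical input being the Monge--Amp\`ere-type identity in (5).

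\smallskip
\noindent\textbf{Items (1), (2), (4).} Since $t\mapsto-\log(-t)$ is increasing and convex on $]-\infty,0[$ and sends $-\infty$ to $-\infty$, $u=-\log(-G)$ is plurisubharmonic, and $u\circ g=-\log(-kG)=u+p$ with $p=-\log k>0$; this is (1). As $u\circ g=u+p$, the currents $\partial u,\bar\partial u,i\partial\bar\partial u$ are $g$-invariant and descend to $X$; on $X'$ one has $i\partial\bar\partial u=\partial(i\bar\partial u)=d(i\bar\partial u)$ with $i\bar\partial u$ $g$-invariant, so its descent $\sigma$ gives a $d$-exact positive current $T_u$ on $X$, $T_u=\partial\sigma=d\sigma$. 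For nontriviality, note $G=-e^{-u}$, so $i\partial\bar\partial G=e^{-u}\bigl(i\partial\bar\partial u-i\partial u\wedge\bar\partial u\bigr)$; if $i\partial\bar\partial u\equiv0$ this is $\le0$, which with $G$ plurisubharmonic forces $\partial u\equiv0$, hence $u$ constant, contradicting the automorphy. A nontrivial $d$-exact positive current cannot live on a K\"ahler surface (pair it with a K\"ahler form), giving (2). For (4), $G$ is pluriharmonic, hence smooth, off $\pi^{-1}(D)$, so the positive closed $(1,1)$-current $i\partial\bar\partial G$ is supported on the curve $\pi^{-1}(D)=\bigcup_jD_j$; by the support theorem it equals $\sum_ja_j[D_j]$ with $a_j\ge0$ the generic Lelong numbers (no extra mass at points, a positive closed $(1,1)$-current supported at a point being zero).

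\smallskip
\noindent\textbf{Item (5).} Off $\pi^{-1}(D)$, where $G$ is smooth and pluriharmonic, a direct computation gives $i\partial\bar\partial u=\frac1{G^{2}}i\partial G\wedge\bar\partial G=i\partial u\wedge\bar\partial u$. Near a point of a component $D_j$ with $a_j>0$ one has $G=a_j\log|z_1|+(\text{pluriharmonic})$ in suitable coordinates, so $u$ is asymptotic to $-\log|\log|z_1||$; its gradient then behaves like $|z_1|^{-1}|\log|z_1||^{-1}$, which is square-integrable, so $u\in L^2_{1,loc}(X')$, and the generic Lelong number of $u$ along $D_j$ is $0$, so $i\partial\bar\partial u$ carries no mass on $\pi^{-1}(D)$. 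Since $u\in L^2_{1,loc}$, the form $i\partial u\wedge\bar\partial u$ has $L^1_{loc}$ coefficients and likewise no mass there; both currents thus agree with their common restriction to $X'\setminus\pi^{-1}(D)$, which proves $i\partial\bar\partial u=i\partial u\wedge\bar\partial u$.

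\smallskip
\noindent\textbf{Items (6), (7).} For (6), take $S=\sigma$: then $i\bar S\wedge S$ descends from $i\partial u\wedge\bar\partial u$, equal by (5) to $i\partial\bar\partial u$, so $I(i\partial\bar\partial u)=\int_X\tau\wedge i\bar S\wedge S=\int_X\tau\wedge T_u=0$, since $\tau$ is closed and $T_u$ is $d$-exact. For (7), $p$-periodicity of $h$ gives $v\circ g=v+p$. Regularising $h$ to smooth $p$-periodic $h_\epsilon$ with $1+h_\epsilon'+h_\epsilon''\ge0$, the chain rule together with (5) gives $i\partial\bar\partial(u+h_\epsilon\circ u)=(1+h_\epsilon'(u)+h_\epsilon''(u))\,i\partial u\wedge\bar\partial u\ge0$ off $\pi^{-1}(D)$, and the boundedness of $h_\epsilon\circ u$ with the Lelong-number argument of (5) shows no mass is added on $\pi^{-1}(D)$; so $u+h_\epsilon\circ u$ is plurisubharmonic, and it converges uniformly to $v$, which is therefore plurisubharmonic. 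The hypothesis $1+h'+h''\ge0$ makes $e^t(1+h')$ non-decreasing, so $h'$ is bounded, $\partial v=(1+h'(u))\partial u\in L^2_{loc}$, and $v\in L^2_{1,loc}(X')$; as before $T:=i\partial\bar\partial v$ descends to a $d$-exact positive current in $L^2_{-1}(X)$. For $I(T)=0$ put $J(\phi):=\int_X\tau\wedge\bigl(\text{descent of }\phi(u)\,i\partial u\wedge\bar\partial u\bigr)$ for bounded $p$-periodic $\phi$: $J$ is linear, $J(1)=I(i\partial\bar\partial u)=0$, and for every smooth $p$-periodic $k$ the current $i\partial\bar\partial(u+k\circ u)$ descends to a $d$-exact current, so by (5) $J(1+k'+k'')=0$, i.e. $J(k'+k'')=0$. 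Since $k\mapsto k+k'$ is a bijection of the space of smooth $p$-periodic functions, $k'+k''=(k+k')'$ exhausts all mean-zero smooth $p$-periodic functions; thus $J$ vanishes there and, with $J(1)=0$, on all smooth $p$-periodic functions, hence by uniform approximation on all bounded $p$-periodic functions. In particular $I(T)=\int_X\tau\wedge\bigl(\text{descent of }(1+h'(u))^2 i\partial u\wedge\bar\partial u\bigr)=J\bigl((1+h')^2\bigr)=0$.

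\smallskip
\noindent\textbf{Item (3) and the main obstacle.} Suppose $X$ is elliptic, $f\colon X\to C$ onto a compact curve, $F$ a general fibre. If $\pi^{-1}(F)\to F$ is trivial, then $X'\to X$ is pulled back from a $\Z$-cover $C'\to C$, $u$ is constant on the compact fibres of $X'\to C'$ and descends to a subharmonic additively automorphic function on $C'$, whose $i\partial\bar\partial$ descends to a positive measure $\nu$ on $C$ with $T_u=f^*\nu$; $d$-exactness of $T_u$ and injectivity of $f^*$ on $H^2$ force $\nu=0$, hence $T_u=0$, contradicting (2). Otherwise a connected component of $\pi^{-1}(F)$ is biholomorphic to $\C\setminus\{0\}\cong\C/\Z\lambda$, on which a power of $g$ acts by $w\mapsto aw$ with $|a|\ne1$; lifting $u$ to $\C$ it is periodic in one real direction and increases by a positive constant in a transverse one, so $u$ minus a real-linear function descends to a subharmonic, hence constant, function on a real $2$-torus, making $u|_{\C\setminus\{0\}}$ a non-constant harmonic function, whereas restricting the identity of (5) to this curve gives $i\partial(u|_{\C\setminus\{0\}})\wedge\bar\partial(u|_{\C\setminus\{0\}})=i\partial\bar\partial(u|_{\C\setminus\{0\}})=0$, forcing $u|_{\C\setminus\{0\}}$ constant --- a contradiction. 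The main obstacle throughout is the Monge--Amp\`ere-type identity (5), and precisely the verification that $i\partial\bar\partial u$ conceals no mass along $\pi^{-1}(D)$; both the vanishing $I(T)=0$ in (7) and the exclusion of ellipticity in (3) reduce to it, the former also using the elementary fact that $k\mapsto k+k'$ is invertible on periodic functions.
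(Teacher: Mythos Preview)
Your argument has one genuine error. In item~(3), Case~1, you invoke ``injectivity of $f^*$ on $H^2$'' to deduce $\nu=0$ from the $d$-exactness of $T_u=f^*\nu$. But on a non-k\"ahlerian elliptic surface the fibre class is homologically trivial (this is precisely Proposition~\ref{prop:elliptic}(1) of the paper), so $f^*:H^2(C,\R)\to H^2(X,\R)$ is the zero map, not injective. The fix is already implicit in what you wrote: you have $\nu$ on $C$ arising as the descent of $i\partial\bar\partial\tilde u$ for an additively automorphic subharmonic $\tilde u$ on the $\Z$-cover $C'$; by the same reasoning as in your item~(2), $\nu$ is itself $d$-exact on $C$, and since $C$ is K\"ahler a positive exact current on it vanishes. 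This is exactly how the paper argues. Your Case~2 (via $u$ and the identity of~(5) restricted to the fibre) is correct but needlessly intricate: the paper instead applies Liouville to the bounded-above function $G$ on the connected components of $\pi^{-1}(F)$, forcing each component to carry a constant value of $G$, and then multiplicative automorphy rules out any non-compact component directly.

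Where your approach genuinely differs is in the proof that $I(T)=0$ in item~(7). The paper solves the ODE explicitly, exhibiting a constant $\mu$ and a continuous $p$-periodic $g$ with $(1+h'(u))^2\,i\partial u\wedge\bar\partial u=i\partial\bar\partial(\mu u+g\circ u)$, so that $i\partial v\wedge\bar\partial v$ is itself $d$-exact and the pairing with $\tau$ vanishes. Your route---observing that the functional $J(\phi)=\int_X\tau\wedge(\phi(u)\,i\partial u\wedge\bar\partial u)$ kills $1$ and all $k'+k''$ for smooth periodic $k$, hence (via surjectivity of $k\mapsto k+k'$) all smooth periodic $\phi$, hence all bounded Borel $\phi$---is a nice alternative that avoids writing down the primitive. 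One caution: your phrase ``by uniform approximation on all bounded $p$-periodic functions'' is not literally correct (uniform limits of continuous functions are continuous); what you need is that $J$ is integration against a finite signed measure on $\R/p\Z$, which vanishes once it kills continuous functions (Riesz). Finally, in~(5) your local analysis of $u$ near $\pi^{-1}(D)$ treats only smooth points; the paper handles the general case by citing B\l ocki's regularity results, and the simple-normal-crossings structure of $\pi^{-1}(D)$ (which you implicitly use) is established via Brunella only after~(3) is in place.
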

\begin{proof}
 The function $\psi:]-\infty,0[\to\R$, $t\mapsto -\log(-t)$ is convex and increasing hence $u$ is plurisubharmonic. The assertions on the additive automorphic behaviour and on the fact that $i\partial\bar\partial u$ descends to a non-trivial exact positive current on $X$ are clear.
 
Suppose now by contradiction that $X$ is elliptic with elliptic fibration $f:X\to B$. By Liouville's theorem it follows that $G$ is constant on the connected components of the general fibers of $f\circ \pi:X'\to B$. Thus by the automorphic behaviour of $G$ the connected components of these general fibers are elliptic curves and $\pi$ factorizes through a $\Z$ covering $\pi':B'\to B$ of the base and a proper elliptic fibration $f':X'\to B'$. Clearly $G$ and $u$ descend then to plurisubharmonic functions on $B'$ with the corresponding automorphic behaviour. But as above this contradicts the fact that $B$ is K\"ahler.

Thus  $X$ is non-k\"ahlerian of algebraic dimension zero and the considerations in \cite[pp. 252-253]{BrunellaGreen2} apply to show that  $\pi^{-1}(D)$ is a divisor with simple normal crossings  and that $i\partial\bar\partial G=\sum_j a_j[D_j]$.

We now look at $u:=\psi\circ G$. By \cite{BlockiRemark}, \cite{Blocki} $u$ is in $L^2_{1,loc}$. Thus $\partial u=-\frac{\partial G}{G}$ is in $L^2_{loc}$,  
$$i\partial\bar\partial u=\frac{i\partial G\wedge\bar\partial G}{G^2}-\frac{i\partial\bar\partial G}{G},$$
and the last term vanishes since $i\partial\bar\partial G$ is an integration current over the polar locus of $G$, where $\frac{1}{G}$ vanishes, of course. We thus get 
$$i\partial\bar\partial u=i\partial u\wedge\bar\partial u,$$
$i\partial u\wedge\bar\partial u$ is $\d$-exact and hence $I(i\partial\bar\partial u)=0$.

Let finally $h$ be a $p$-periodic function satisfying $1+h'+h''\ge0$ as distributions and let
 $v:=u+h\circ u$. Away from the poles of $u$ we have $i\partial\bar\partial v=((1+h'+h'')\circ u )i\partial\bar\partial u$ and subharmonicity of $v$ here is a consequence of our assumption on $h$. By the mean value inequality $v$ is plurisubharmonic around the poles of $u$ as well.  
Since $h$ is continuous and periodic it will be bounded by some constant $C$ and we get $v\ge u-C$. Thus the singularities of $v$ are no worse than those of $u$, by \cite[Theorem 3.3]{Blocki}.  It remains to check that $I(T)=0$. 
For this note first that the condition  $1+h'+h''\ge0$ is equivalent to $(e^t+e^th')'\ge0$ and thus we may define an increasing function $f:[0,\infty[\to \R$ by $f(x):=(e^t+e^th')'([0,x])$, since $(e^t+e^th')'$ is a positive measure. It follows that the distribution $h'$ is represented by an $L^\infty$ function. 
Thus we can write $i\partial v\wedge\bar\partial v=(1+h'\circ u)^2 i\partial u\wedge\bar\partial u$. We shall exhibit a positive constant $ \mu$ and a continuous $p$-periodic function $g$ on $\R$ such that
 \begin{equation}\label{eq:H}
 (1+h'\circ u)^2 i\partial u\wedge\bar\partial u=i\partial\bar\partial(\mu u+g\circ u).
 \end{equation}
  Put 
 $H:=(1+h'\circ u)^2$, $\mu:=\frac{1}{p}\int_0^pH(s)\d s$, $C:=\frac{1}{e^p-1}\int_0^p(e^sH(s)-e^s\mu)\d s$ and $g(t):=\int_0^t(H(s)-\mu)\d s -e^{-t}\int_0^t(e^sH(s)-e^s\mu)\d s +C(1-e^{-t})$. Then $\mu$ and $g$ fulfill the desired conditions and thus $i\partial v\wedge\bar\partial v$ is $\d$-closed and hence $I(T)=0$.
 \end{proof}

In fact it will follow from the work of Brunella in \cite{BrunellaGreen1}, \cite{BrunellaGreen2} and from our  Proposition \ref{prop:hyperbolic} that if $X$ admits a Green function then all exact positive $(1,1)$-currents on $X$ are up to a multiplicative factor of the form $i\partial\bar\partial v$ for an additively automorphic function $v$ as above, see Corollary \ref{cor:structureGreen}. 
\section{Classification of non-k\"ahlerian surfaces from a dynamical point of view}\label{section:classification}
\subsection{The known classes of non-k\"ahlerian surfaces}\label{subsection:historical_classification}
The known minimal non-k\"ahlerian surfaces may be divided into the following classes:
\begin{enumerate}
\item minimal elliptic non-k\"ahlerian surfaces,
\item non-elliptic Hopf surfaces,
\item Inoue surfaces,
\item Kato surfaces.
\end{enumerate}
Note that any non-k\"ahlerian surface admits a unique minimal model \cite[Theorem VI.1.1]{BHPV}. Here we will give a short description of each class; see \cite{NakamuraSugaku} for a detailed exposition.

\subsubsection{Minimal elliptic non-k\"ahlerian surfaces}\label{subsubsection:elliptic}
These are by definition minimal surfaces $X$ with odd first Betti number, admitting a fibration $\pi:X\to Y$ with elliptic general fibers onto a curve $Y$. It can be shown \cite[Proposition 3.17]{Brinzanescu} that in this case the fibration $\pi$ is a {\em quasi-bundle}, i.e. all its smooth fibers are pairwise isomorphic and its singular fibers are multiples of smooth elliptic curves. From loc. cit. it also follows that $h^{1,0}(X)=h^{1,0}(Y)$, i.e. all holomorphic $1$-forms on $X$ are pull-backs of holomorphic $1$-forms on $Y$, see also the proof of the next proposition.

 \begin{Prop}\label{prop:elliptic} 
If $X$ is a minimal elliptic  non-k\"ahlerian surface, then the following assertions hold:
\begin{enumerate}
\item Every positive divisor $D$ on $X$ is a positive combination with rational coefficients of fibers of $\pi$ and is homologically trivial. In particular there exist exact positive $(1,1)$-currents on $X$ not  in $L^2_{-1}$.
\item All exact positive $(1,1)$-currents $T$ which are in   $L^2_{-1}$ necessarily have $I(T)>0$.
\end{enumerate}
\end{Prop}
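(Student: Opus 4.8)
The plan is to treat the two assertions separately, building on the structure theory of minimal elliptic non-kählerian surfaces recalled above. For part (1), I would start from the fact that $\pi\colon X\to Y$ is a quasi-bundle, so any irreducible compact curve $E\subset X$ is either a component of a fiber (hence, since singular fibers are multiples of smooth elliptic curves, a rational multiple of a general fiber $F$) or dominates $Y$; but the latter is impossible because then $\pi|_E$ would be a nonconstant map to a curve forcing, via the genus-$1$ generic fibers, $E$ to be horizontal, contradicting that the fibers of $\pi$ are elliptic with $E$ meeting each fiber — more simply, $E\cdot F=0$ would fail while $E$ being contained in a fiber is the only way to have $E\cdot F=0$ on a surface fibered over a curve, and the negative semidefiniteness of the intersection form combined with $F^2=0$ forces $E\cdot F=0$. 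So every effective divisor $D$ is a nonnegative rational combination of fibers, hence numerically a multiple of $F$; since $F$ is a general fiber it is homologous to every other fiber and, $X$ being non-kählerian elliptic, $F$ is homologically trivial (its class lies in the isotropic line $\{\tau\}^\perp\cap\{\tau\}$; equivalently $F^2=0$ and $F\cdot\tau=0$ forces $[F]=0$ in $H^2(X,\R)$ up to torsion, and in fact $[F]=0$ since $F$ bounds). Consequently $[D]$ is a nontrivial $d$-exact positive $(1,1)$-current whenever $D\neq0$, and because $[D]$ is an integration current over a divisor it carries a Dirac mass along $D$ and therefore cannot lie in $L^2_{-1}$, giving the final sentence of (1).

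For part (2), let $T$ be an exact positive $(1,1)$-current in $L^2_{-1}(X)$ and suppose $I(T)=0$; I aim for a contradiction. Write $T=\partial S$ with $S\in L^2(X)$ of bidegree $(0,1)$; by Proposition \ref{prop:S} the current $R:=i\bar S\wedge S$ is a nef pluriharmonic current with $\int_X\tau\wedge R=I(T)=0$. By Proposition \ref{prop:pluriharmonic} this forces $R$ to be closed, and then (being nef and closed with $\int_X\tau\wedge R=0$) $d$-exact. Now I want to exploit that all holomorphic $1$-forms on $X$ are pulled back from $Y$: since $h^{1,0}(X)=h^{1,0}(Y)$ and $Y$ is a curve, there is a holomorphic $1$-form $\beta$ on $Y$ — or a basis thereof — controlling the $(1,0)$-part of the situation. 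The key computation is to compare $S$ with $\pi^*$ of a primitive of the pushed-forward current on $Y$, or equivalently to show that $T$ itself is, up to a pullback from $Y$, forced to be an integration current on fibers, contradicting $T\in L^2_{-1}$ by part (1). Concretely, I would push $T$ forward by $\pi$: $\pi_*T$ is a closed positive current of bidegree $(1,1)$ on the curve $Y$, i.e. a positive measure; its mass is $\int_Y\pi_*T=\int_X T\wedge\pi^*\omega_Y$ for a Kähler form $\omega_Y$ on $Y$. Either this mass is zero, in which case $T$ is supported on fibers, so $T=\sum c_j[F_j]+T'$ with $T'$ "flat" along the fibration, and one argues $T'=0$ using that a fiberwise-closed positive current on a family of elliptic curves with no atoms must vanish — then $T$ is an integration current, contradicting $T\in L^2_{-1}$; or the mass is positive, and then I claim $I(T)>0$ directly by relating $I(T)$ to the $\pi$-energy of a potential, using that on each elliptic fiber the restricted potential is pluriharmonic hence the "vertical" Dirichlet energy localizes into the "horizontal" one captured by $\tau\sim\pi^*$(point class).

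The step I expect to be the main obstacle is precisely this last dichotomy: showing that positivity of $\pi_*T$ forces $I(T)>0$ rather than merely $I(T)\geq0$. The natural route is to write, on the $\Z$- or universal cover adapted to the fibration, $T=i\partial\bar\partial\varphi$ for a quasi-psh $\varphi$ whose restriction to a generic fiber (an elliptic curve) is harmonic — hence constant — so that $\varphi$ descends to a function of the base variable plus a fiberwise-harmonic correction; then $i\partial S\wedge\bar\partial S$ on $X$ integrates against $\tau\cong\pi^*[\mathrm{pt}]$ to the Dirichlet energy $\int_Y|d\psi|^2$ of the corresponding base potential $\psi$, which is strictly positive exactly when $\pi_*T\neq0$. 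Making "restricts to a harmonic function on the generic fiber" rigorous for an $L^2_{-1}$ current, and handling the multi-valuedness/automorphy coming from $b_1(X)$ being odd (so that the relevant potential lives on a cover), is where the real work lies; I would lean on Remark \ref{rem:lower-bound}, on the description of $\tau$ as generating the isotropic line, and on Brunella-type slicing arguments for positive currents along a fibration to control the fiberwise behaviour.
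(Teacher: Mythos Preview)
Your outline for part (1) is fine and matches the paper's ``this is clear''. The issue is part (2), where your push-forward dichotomy is a red herring and contains an actual error.

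First, the dichotomy collapses: since $T$ is $d$-exact and $\pi^*\omega_Y$ is $d$-closed, one has
\[
\int_Y \pi_*T \;=\; \int_X T\wedge \pi^*\omega_Y \;=\; 0
\]
automatically, so your ``mass positive'' case never occurs. Second, your ``mass zero'' conclusion is wrong: $\pi_*T=0$ only says that the $i\,dw\wedge d\bar w$-coefficient of $T$ vanishes (in local fibered coordinates $(z,w)$), which by positivity forces $T=T_{z\bar z}\,i\,dz\wedge d\bar z$; this is \emph{not} ``$T$ supported on fibers'' but rather the opposite --- $T$ has only a horizontal component. So the decomposition $T=\sum c_j[F_j]+T'$ you write down does not follow.

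What actually drives the argument is the observation you bury at the end and label ``the main obstacle'': since $\{\pi^*\omega_Y\}=c\{\tau\}$, the hypothesis $I(T)=0$ reads $\int_X i\bar S\wedge S\wedge\pi^*\omega_Y=0$, and in coordinates $(z,w)$ this integral picks out exactly $\int |S_{\bar w}|^2$, forcing $S=f\,d\bar z$ with no fiber component. Then $T=i\partial S$ real and $\bar\partial S=0$ give $\partial f/\partial w=\partial f/\partial\bar w=0$, so $S$ is the pullback of an $L^2_{loc}$ current $R^\circ$ from the base $Y^\circ$, and $T$ is the pullback of $i\partial R^\circ$. This is the mechanism that makes your ``$\varphi$ restricted to fibers is constant'' claim true --- it comes \emph{from} $I(T)=0$, not from some independent harmonicity argument, and it should be the spine of the proof rather than a fallback. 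The detour through Propositions \ref{prop:S} and \ref{prop:pluriharmonic} to get $i\bar S\wedge S$ $d$-exact is correct but unused.

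Finally, you do not address the singular fibers at all. The paper spends half its proof showing that $R^\circ$ extends across each multiple fiber to a current $R$ on all of $Y$ with $i\partial R$ positive and exact (using the local $\Z/n\Z$-quotient description of a multiple fiber and an explicit computation with the branched cover $\phi(z)=z^n$); only then can one invoke ``positive exact on a compact curve implies zero'' and conclude $T=0$ on $X^\circ$, hence $T$ supported on finitely many fibers, contradicting $T\in L^2_{-1}$. Without this extension step the argument is incomplete.
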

\begin{proof}
 The first assertion is clear. 
 
 Let now $T=i\partial S$ be an exact positive $(1,1)$-current on $X$, with $S$ a $(0,1)$-current with coefficients in $L^2(X)$. Let $\omega_Y$ a volume form on $Y$. Then $\omega_X:=\pi^*\omega_Y$ is positive non-trivial and such that $\omega_X\wedge\omega_X=0$. Thus $\{\omega_X\}=c\{\tau\}\in H^{1,1}_{BC}(X,\R)$ for some positive real number $c$. Suppose that
 \begin{equation}\label{eq:I}
0=I(T):=\int i\bar S\wedge S\wedge \omega_X.
\end{equation}
 We shall show that $T=0$.
 
 Let $Y^\circ$ be the set of regular values of $\pi$ and set $X^\circ:=\pi^{-1}(Y^\circ)$. We will begin by working on $X^\circ$. Since $\pi:X\to Y$ is a quasi-bundle it follows that the fibration $\pi^\circ: X^\circ\to Y^\circ$ is locally trivial over $Y^\circ$. For such a local trivialization we choose local coordinates $(z,w)$ on $X^\circ$ where $z$ is a local coordinate on $Y^\circ$ and $w$ is a coordinate for the fiber direction. The formula \eqref{eq:I} implies that $S=f\d \bar z$ where $f$ is locally in $L^2$ on $X^\circ$. Since $T$ is real and $T=i\partial S$ we also get $\frac{\partial f}{\partial w}=0$ as distributions. Since $\bar\partial S=0$ we further get $\frac{\partial f}{\partial \bar w}=0$. Thus the distribution $f$ is independent of the $w$ coordinate and it follows that $f$ is a tensor product of the function $1$ in the vertical direction with an $L^2_{loc}$-function $f^\circ$ on $Y^\circ$, cf. \cite[IV.5.Exemple 1]{Schwartz}. Setting $R^\circ=f^\circ\d z$ on $Y^\circ$ we may say that $S$ "comes from $R^\circ$ from the base", meaning by this that $S$ is the tensor power of the function $1$ in fiber direction with $R^\circ$ in horizontal direction. The form $R^\circ$ has coefficients in $L^2_{loc}(Y^\circ)$. Moreover, $T$ "comes from $i\partial R^\circ$ from the base", in particular $i\partial R^\circ$ is a positive $(1,1)$-current on $Y^\circ$. We shall next show that it admits an extension to $Y$ as a positive exact $(1,1)$-current. From this it will follow that $i\partial R^\circ=0$.
 
 We look at the situation around a singular fiber of $\pi$ over some critical value $y_0\in Y$.
 By \cite[Proposition III.9.1 and p.207]{BHPV} we know that over a small neighbourhood $V$ of $y_0$ in $Y$ the restriction $X_V$ of $X$ may be seen as the quotient $p:\T\times \D\to X_V$ of $\T\times \D$ by the action of $\Z/n\Z$ generated by $(w,z)\mapsto (w+1/n,\rho z)$ where $\T$ is a one dimensional complex torus given as $\C/\Lambda$, $\Lambda$ is the lattice generated by $1$ and some $\alpha\in\mathbb{H}$, and $\rho= \exp(\frac{2 i\pi}{n})$. Supposing that $V$ is byholomorphic to $\D$ we thus get a commutative diagram
  $$
 \xymatrix{\T\times \D \ar[r]^{p} \ar[d]^{pr_2} & X_V \ar[d]^{\pi} \\
\D\ar[r]^{\phi} & V, }
$$ 
 where $\phi(z)=z^n$. Note that $p$ is an unramified covering map. We set $\tilde S:=p^*S$, $\tilde T:=p^*T$ and we have as before $\int i\bar{\tilde S}\wedge\tilde S\wedge pr_2^*\phi^*\omega_V=0$ hence  $\tilde S=\tilde f \d z$ for some $L^2$ function $\tilde f$ on $\T\times \D$ not depending on the vertical variable. Thus there exists a $(0,1)$-current $\tilde R\in L^2_{loc}(\D)$ such that $\tilde S$ and $\tilde T$ "come from $\tilde R$ and $i\partial\tilde R$ respectively from the base". 
 In particular $i\partial\tilde R$ is a positive $(1,1)$-current on $\D$. Set $R:=\phi_*\tilde R$. We next show that on $V^*:=V\setminus\{ y_0\}$ we have
 $$R_{|V^*}=R^\circ.$$
 We set $\zeta=z^n=\phi(z)$ in $V$. Note that if we write $R^\circ=f^\circ\d\bar\zeta$ on $V^*$, we have $\phi^*R^\circ=n\bar z^{n-1}(f^\circ\circ\phi)\d z=\tilde f\d z$ on $\D^*$. Then for a $(1,0)$-form $\eta=g\d\zeta$ on $V^*$ we get
 $<\phi_* \tilde R,\eta>=\int_\D nz^{n-1}\tilde f \cdot(g\circ\phi)\d\bar z \wedge \d z =
   \int_\D n^2|z|^{2(n-1)}(f^\circ\circ\phi) \cdot(g\circ\phi)\d\bar z \wedge \d z =  \int_V f^\circ\cdot g\d \bar\zeta\wedge\d\zeta=<R^\circ,\eta> $, hence $R_{|V^*}=R^\circ.$
 Thus $i\partial R=\phi_*(i\partial\tilde R)$ is a positive exact current on $V$ extending the current $i\partial R^\circ$. Since it can be considered on the whole $Y$, this extension must be trivial. Thus $T$ itself is trivial on $X^\circ$. But then $T$ is concentrated on a finite number of fibers of $\pi$. Unless $T=0$ this contradicts the assumption $T\in L^2_{-1}(X)$ and the proof is finished.  
 \end{proof}

\subsubsection{Non-elliptic Hopf surfaces}\label{subsubsection:Hopf} A compact complex surface $X$ is said to be a {\em Hopf surface} if its universal covering space is isomorphic to 
$\C^2\setminus \{0\}$. A Hopf surface is called {\em primary} if its fundamental group is infinite cyclic, and {\em secondary} otherwise. The following facts on Hopf surfaces $X$ and much more were shown by Kodaira in \cite{KodairaStructureII}:
\begin{enumerate}
\item If $X$ is a primary Hopf surface, then its fundamental group is generated by a {\em contraction} $g:\C^2\setminus \{0\}\to\C^2\setminus \{0\}$ which for suitable global holomorphic coordinates $(z_1,z_2)$ on $\C^2$ has the following {\em normal form}
\begin{equation}\label{eq:contraction}
g(z_1,z_2)=(\alpha_1z_1+\lambda z_2^m, \alpha_2 z_2),
\end{equation}
where $m\in\Z_{>0}$, $\alpha_1,\alpha_2,\lambda\in\C$ and 
$$(\alpha_1-\alpha_2^m)\lambda=0, \ 0<|\alpha_1|\le|\alpha_2|<1.$$
\item A primary Hopf surface $X=(\C^2\setminus \{0\})/<g>$ with $f$ as above is elliptic if and only if $\lambda=0$ and $\alpha_1^{k_1}=\alpha_2^{k_2}$ for some positive integers $k_1$, $k_2$.
\item If $X=(\C^2\setminus \{0\})/\pi_1(X)$ is a non-elliptic secondary Hopf surface then its fundamental group $\pi_1(X)$ is isomorphic to $\Z\times(\Z/l\Z)$ where the direct factor $\Z$ is generated by a contraction $g$ of the form  \eqref{eq:contraction} and the finite cyclic group $\Z/l\Z$ is generated by an automorphism of $\C^2\setminus \{0\}$ of the form
$$(z_1,z_2)\mapsto(\epsilon_1 z_1, \epsilon_2 z_2),$$
where $\epsilon_1$, $\epsilon_2$ are primitive $l$-th roots of unity satisfying.
$$(\epsilon_1-\epsilon_2^m)\lambda=0.$$
In particular $X$ admits a finite unramified cyclic covering by the primary Hopf surface $(\C^2\setminus \{0\})/<g>$. 
\item $b_1(X)=1$ and $b_2(X)=0$.
\item Non-elliptic Hopf surfaces contain one or at most two irreducible compact curves according to whether $\lambda\neq0$ or $\lambda=0$, for $\lambda$ as in equation \eqref{eq:contraction}. These curves are elliptic.
\end{enumerate}

In their study of closed positive $(1,1)$-currents on compact complex surfaces done in \cite{HarveyLawson}, Harvey and Lawson subdivide non-elliptic primary Hopf surfaces into two classes. Their definitions are immediately extended to secondary non-elliptic Hopf surfaces too as follows:
\begin{enumerate}
\item {\em Class $1$} contains those non-elliptic Hopf surfaces for which the coefficient $\lambda$ in the above formulas vanishes. (Thus this class contains exactly those Hopf surfaces admitting precisely two elliptic curves.)
\item {\em Class $0$} contains those non-elliptic Hopf surfaces for which  $\lambda\neq0$. (These are the Hopf surfaces containing only one elliptic curve.)
\end{enumerate}

\begin{Prop}\label{prop:Hopf} 
\begin{enumerate}
\item
Up to a non-negative factor there exists exactly one closed positive $(1,1)$-current on a non-elliptic Hopf surface of class $0$. This is the integration current along the elliptic curve of the surface. 
\item
Every non-elliptic Hopf surface $X$ of class $1$ admits non-trivial closed positive $(1,1)$-currents  $T$ in $L^2_{-1}(X)$ and for such currents one always has $I(T)>0$.
\end{enumerate}
\end{Prop}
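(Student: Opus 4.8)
The plan is to reduce both statements to an analysis on the universal covering $p\colon W\to X$, where $W=\C^{2}\setminus\{0\}$ and the deck group is generated by a contraction $g$ of $\C^{2}$ with the origin as attracting fixed point. Given a positive closed $(1,1)$-current $T$ on $X$, its pullback $p^{*}T$ is closed positive on $W$; it extends across the puncture to a closed positive current $\hat T$ on $\C^{2}$ (closed positive $(1,1)$-currents extend across analytic sets of codimension $\ge2$), and on $\C^{2}$ it admits a global plurisubharmonic potential, $\hat T=i\partial\bar\partial\hat u$. Since $g$ extends to a biholomorphism of $\C^{2}$ and the extension is unique, $g^{*}\hat T=\hat T$, so $\hat u\circ g-\hat u=\operatorname{Re}\phi$ for an entire $\phi$. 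The engine of the proof is: the automorphy constant $a:=\operatorname{Re}\phi(0)$ satisfies $a\le0$, and if $a=0$ then $\hat u$ is pluriharmonic and $T=0$. Indeed, iterating gives $\hat u\circ g^{n}=\hat u+na+O(1)$ locally uniformly (the corrections decay geometrically, as $g^{n}\to0$ and $\operatorname{Re}\phi$ is Lipschitz near $0$); since $g^{n}(z)\to0$ and $\hat u$ is upper semicontinuous, $\limsup_{n}\hat u(g^{n}z)\le\hat u(0)<\infty$, which forces $a\le0$; if $a=0$ the corrections sum to a pluriharmonic function $r$ on $\C^{2}$ with $r(0)=0$, the inequality becomes $\hat u+r\le\hat u(0)$, and Liouville's theorem for plurisubharmonic functions forces $\hat u+r\equiv\hat u(0)$, so $\hat u=\hat u(0)-r$ is pluriharmonic and $\hat T=0$.

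For (1) we may assume $X$ primary, since a secondary non-k\"ahlerian Hopf surface of class $0$ is finitely covered by a primary one of class $0$ and the statement descends. Let $T$ be closed positive on $X$. By Proposition \ref{prop:decomposition} and the fact that $C$ is the only compact curve of $X$ (here $\lambda\ne0$), $T=c[C]+T'$ with $c\ge0$ and $T'$ nef with $\chi_{C}T'=0$; as $[C]$ is closed, so is $T'$, and it suffices to show $T'=0$. Lifting $T'$ and running the engine gives $a\le0$; the point is to exclude $a<0$. A strictly negative $a$ would give $\hat u$ a genuine logarithmic pole at the puncture, i.e. $\nu(\hat T',0)=a/\log\rho>0$ with $\rho<1$ the spectral radius of the linear part of $g$. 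On the other hand, because $\chi_{C}T'=0$ and $\{z_{2}=0\}$ is the only $g$-invariant irreducible curve (this uses $\lambda\ne0$), an orbit argument combined with Siu's theorem shows $\hat T'$ puts no mass on any curve, so its positive Lelong numbers are concentrated at the puncture alone; one then derives a contradiction from a mass/Lelong analysis of the pole produced by $a<0$ (alternatively, one can work directly on $X\setminus C$, which carries a $\mathbb{C}$-fibration over an elliptic curve, and argue as in the proof of Proposition \ref{prop:elliptic}). Hence $a=0$, the engine gives $T'=0$, and $T=c[C]$; since $[C]$ itself is closed positive, it realises the unique such current up to a non-negative factor, and it lies outside $L^{2}_{-1}(X)$ because its potential $\log|z_{2}|$ does not have gradient in $L^{2}$ transversally to $\{z_{2}=0\}$.

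For (2), $X$ is diagonal, $g(z)=(\alpha_{1}z_{1},\alpha_{2}z_{2})$, with elliptic curves $C_{1},C_{2}$ over $\{z_{1}=0\},\{z_{2}=0\}$. For existence, choose $s,t>0$ with $|\alpha_{1}|^{s}=|\alpha_{2}|^{t}$ and put $u:=\log(|z_{1}|^{2s}+|z_{2}|^{2t})$ on $W$: writing $u$ as the convex (log-sum-exp) function of the pluriharmonic functions $2s\log|z_{1}|$ and $2t\log|z_{2}|$ shows $u$ is plurisubharmonic; it is additively automorphic, lies in $L^{2}_{1,\mathrm{loc}}(W)$, hence $T:=i\partial\bar\partial u$ descends to a nonzero positive current on $X$ with $T\in L^{2}_{-1}(X)$, and it is $d$-exact because $T=\partial S$ with $S:=i\bar\partial u$ descending to $X$. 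It remains to prove $I(T)>0$ for every $d$-exact positive $T\in L^{2}_{-1}(X)$. Assume $I(T)=0$. By Proposition \ref{prop:S}, $i\bar S\wedge S$ is nef pluriharmonic, puts no mass on analytic sets, and has $\int_{X}\tau\wedge i\bar S\wedge S=0$, hence is $d$-exact by Proposition \ref{prop:pluriharmonic}; and the Bedford--Taylor identity of \S\ref{subsection:I} with $\psi\equiv1$ gives $T\wedge T=0$. Lifting to $W$, extending to $\hat u$ on $\C^{2}$, and applying the engine, $a\le0$; if $a=0$ then $\hat u$ is pluriharmonic and $T=0$. The case $a<0$ is again excluded by a singularity/mass argument at the puncture, now using both $T\wedge T=0$ and the presence of the two invariant curves $C_{1},C_{2}$ (equivalently, that $i\bar S\wedge S$ carries no mass on $C_{1}\cup C_{2}$). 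Hence $I(T)>0$.

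I expect the genuine obstacle in both parts to be the exclusion of $a<0$: showing that on a class $0$ surface the integration current $[C]$ is the only $d$-exact nef current, and that on a class $1$ surface a $d$-exact positive $L^{2}_{-1}$-current with $I(T)=0$ (hence with vanishing self-intersection and with $i\bar S\wedge S$ closed and diffuse) must vanish. This should come from a careful study of the logarithmic singularity that $a<0$ forces at the attracting fixed point of $g$ and of its spreading under the dynamics, combined with Siu's decomposition and, for class $1$, with the fibration geometry over $\mathbb{P}^{1}$; input analogous to Harvey--Lawson's description of closed positive currents on Hopf surfaces may also be needed.
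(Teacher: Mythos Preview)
Your proposal is incomplete. In both parts you reduce everything to excluding the case $a<0$ of a strictly negative automorphy constant, and then explicitly leave this as an ``obstacle'' to be handled by Lelong-number and dynamical arguments that you only sketch. For class~$0$ you gesture at Siu's theorem and an orbit argument, and for class~$1$ at ``$T\wedge T=0$ and the presence of the two invariant curves'', but neither is carried out; these are precisely the substantive steps, and without them there is no proof. Your closing paragraph acknowledges this.

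The paper proceeds differently. For (1) it simply invokes \cite[Theorem 69]{HarveyLawson} in the primary case and descends via the finite covering, as you also suggest. For (2), rather than analysing singularities at the attracting fixed point, it exploits the foliation geometry of class-$1$ Hopf surfaces. One constructs an explicit smooth closed positive $(1,1)$-form $\tilde\Omega$ representing the class $\{\tau\}$, together with the holomorphic foliation $\cF$ defined by the vector field $V=rz_1\frac{\partial}{\partial z_1}-z_2\frac{\partial}{\partial z_2}$ (with $r=\log|\alpha_1|/\log|\alpha_2|$); the leaves of $\cF$ lie in the kernel of $\tilde\Omega$ and, by \cite[Lemma 54]{HarveyLawson}, are dense in the fibres of a continuous $g$-invariant map $\pi\colon X\to[0,1]$. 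If a nontrivial $T\in L^2_{-1}(X)$ has additively automorphic potential $u$ (which exists by \cite[Proposition 4]{TomaCurrents}) and $I(T)=0$, then $i\partial u\wedge\bar\partial u\wedge\tilde\Omega=0$, hence $\partial u\wedge\tilde\Omega=0$ and $\bar\partial u\wedge\tilde\Omega=0$, so $u$ is constant on each leaf of $\cF$ and by density constant on each fibre of $\pi$; since $\pi$ is $g$-invariant, the automorphy of $u$ is trivial and $T=0$, a contradiction. This foliation/density argument is what replaces your unproven exclusion of $a<0$.
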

\begin{proof}
The assertion on Hopf surface of class $0$ was proved in \cite[Theorem 69]{HarveyLawson} for primary Hopf surfaces. The case of the secondary Hopf surfaces immediately follows from this by  pull-back and push-forward through the finite covering map $(\C^2\setminus \{0\})/<g>\to
(\C^2\setminus \{0\})/(\Z\times(\Z/l\Z))$.

In the same way it will be enough to establish the second assertion only for primary non-elliptic Hopf surfaces  of class $1$. Let $X$ be such a surface given by a contraction $g$ of the form
$$g(z_1,z_2)=(\alpha_1z_1, \alpha_2 z_2),$$
with $0<|\alpha_1|\le|\alpha_2|<1$. The existence of non-trivial closed positive $(1,1)$-currents  in $L^2_{-1}(X)$ follows from \cite[Theorem 58]{HarveyLawson}, where it is even proved that smooth such currents exist. More precisely in \cite{HarveyLawson} Harvey and Lawson consider the following objects on  $\C^2\setminus \{0\}$  some of which obviously descend to $X$. Set 
$$r=\frac{\log|\alpha_1|}{\log|\alpha_2|},$$  
$$\phi: \C^2\setminus \{0\}\to \R, \ \phi(z_1,z_2):=\log(|z_1|^2+|z_2|^{2r}),$$
$$\eta:=z_2\d z_1-rz_1\d z_2.$$
$$\Omega:=i\partial\bar\partial\phi=
\frac{|z_2|^{2(r-1)}}{(|z_1|^2+|z_2|^{2r})^2}i\eta\wedge\bar\eta,$$
$$V:=rz_1\frac{\partial}{\partial z_1}-
z_2\frac{\partial}{\partial z_2},$$
$$\pi:X \to [0,1], \ 
\pi(z_1,z_2):=\frac{|z_1|^2}{|z_1|^2+|z_2|^{2r}}.$$ 
It is said in \cite{HarveyLawson} that the form $\Omega$ is smooth on $X$ but this might not be the case around the elliptic curve $E_1:=\{z_2=0\}$ when $r\notin\N$. To remedy to this one may consider 
$$r'=\frac{1}{r},$$  
$$\phi': \C^2\setminus \{0\}\to \R, \ \phi'(z_1,z_2):=\log(|z_2|^2+|z_1|^{2r'}),$$
$$\eta':=z_1\d z_2-r'z_2\d z_1=-r'\eta.$$
$$\Omega':=i\partial\bar\partial\phi'=
\frac{|z_1|^{2(r'-1)}}{(|z_2|^2+|z_1|^{2r'})^2}i\eta'\wedge\bar\eta',$$
$$\pi':X\to [0,1], \ 
\pi'(z_1,z_2):=\frac{|z_1|^2}{|z_1|^2+|z_2|^{2r}},$$
and 
$$\tilde \Omega:=(\psi\circ\pi)\Omega+(\psi\circ\pi')\Omega',$$
where $\psi:[0,1]\to[0,1]$ is smooth and equals $1$ in a neighbourhood of $0$ and $1$ in a neighbourhood of $1$. Then $\tilde \Omega$ is a smooth positive $\d$-closed $(1,1)$-form on $X$ without zeroes on $X$. 
The $\d$-closedness of $\tilde\Omega$ follows from the fact that $\partial\pi$ and $\partial\pi'$ are proportional to $\eta$ on $X\setminus E_1$ and on $X\setminus E_2$, respectively. 

Note further that the holomorphic vector field $V$ defines a holomorphic foliation $\cF$ on $X$, which coincides with the complex foliation defined by $\tilde \Omega$ and whose leaves are dense in the fibers of $\pi$ as is shown in \cite[Lemma 54]{HarveyLawson}.  

Let now $T$ be a non-trivial closed positive $(1,1)$-current in $L^2_{-1}(X)$.
By \cite[Proposition 4]{TomaCurrents} there exists an additively automorphic $i\partial\bar\partial$-potential $u$ of $T$ in $L^2_{1, loc}(\C^2\setminus \{0\})$.  Supposing by contradiction that $I(T)=0$, we infer that $i\partial u\wedge\bar\partial u\wedge\tilde{\Omega}=0$ on $X$ and also that $\partial u\wedge\tilde{\Omega}=0$ and $\bar\partial u\wedge\tilde{\Omega}=0$. Thus the restriction of $u$ to those leaves of $\cF$ not contained in the polar set of $u$ is subharmonic and in fact constant. 
By semi-continuity of $u$ and since the closures of the leaves of $\cF$ are fibers of $\pi$ it follows that $u$ is constant on these fibers as well. Thus $u$ has trivial additive automorphy and $T=0$. This is a contradiction.
\end{proof}

\subsubsection{Inoue surfaces}\label{subsubsection:Inoue}  In this paper by an {\em Inoue surface} we understand a  compact complex surface $X$ with $b_1(X)=1$, $b_2(X)=0$  and no compact complex curves. The construction of Inoue surfaces appears in \cite{Inoue} and their classification was completed in \cite{TelemanInoue} 
 and in \cite{LiYauZheng}. 
Their universal cover is $\bH\times\C$ and their universal group is generated by four affine transformations $g_0$, $g_1$, $g_2$, $g_3$ in such a way that $\pi_1(X)$ appears as a semidirect product $\Gamma\rtimes<g_0>$ of $\Gamma$ by $<g_0>$, where $\Gamma$ is the subgroup generated by $g_1$, $g_2$, $g_3$, and $g_0$ acts on $\bH\times\C$ by
$$g_0(w,z)=(\alpha w, \beta z+t),$$
for some positive real number $\alpha<1$ and suitable complex numbers $\beta$ and $t$. Moreover for $i=1,2,3$ the elements $g_i$ act on $\bH\times\C$ by
$$g_i(w,z)=( w+a_i, z+b_i w+c_i),$$
for some real numbers $a_i$ $b_i$ and complex numbers $c_i$, see \cite{Inoue}. Here $w$ and $z$ denote complex coordinates on $\bH$ and on $\C$ respectively. Thus the quotient group $\pi_1(X)/\Gamma$ is infinite cyclic generated by the class $\hat g_0$ of $g_0$, defines a $\Z$-covering $\pi:X'\to X$ of $X$ and the function $y:=\Im m (w)$ defined on $\bH\times\C$ descends to a function $\hat y:X'\to\R$. 

\begin{Prop}\label{prop:Inoue} If $X$ is an Inoue surface, then under the  above notations putting $G:=-\hat y$ we get a Green function 
$G:X'\to\R$ without poles on $X$.  Moreover if $u:=-\log(-G)$ and  
$p:=-\log \alpha$, then up to a multiplicative factor any non-trivial closed positive $(1,1)$-current $T$ on $X$ is of the form $T=i\partial\bar\partial v$, where $ v:=u+h\circ u$ for some continuous $p$-periodic function $h:\R\to\R$ satisfying $1+h'+h''\ge0$ as distributions. All such currents are in 
 $L^2_{-1}(X)$ and have $I(T)=0$.
\end{Prop}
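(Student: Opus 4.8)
The plan is to split the statement into three parts: first, that $G:=-\hat y$ defines a Green function on $X'$ in the sense of Definition \ref{def:Green function}; second, the structure result that every non-trivial closed positive $(1,1)$-current on $X$ is, up to scale, of the form $i\partial\bar\partial(u+h\circ u)$; third, that such currents lie in $L^2_{-1}(X)$ and have $I(T)=0$. The last part is immediate from Proposition \ref{prop:Green}, items (5), (6), (7), once the structural form is established, and item (2) of that proposition together with the fact that $\hat y>0$ on $\bH\times\C$ already gives an exact positive current with no poles, so $D=0$ and $\pi^{-1}(D)=\emptyset$; one only needs to check the multiplicative automorphy $G\circ g_0=\alpha G$, which is clear from $g_0(w,z)=(\alpha w,\beta z+t)$ and $y=\Im m(w)$, together with plurisubharmonicity and pluriharmonicity of $-\hat y$ — but $y$ is pluriharmonic on $\bH\times\C$ since it is the imaginary part of a holomorphic function, hence $-\hat y$ is pluriharmonic (in particular plurisubharmonic) everywhere, so the "Green function" here is of a degenerate pole-free type.

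\medskip

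For the structure result I would work on the $\Z$-cover $X'$, or rather on the universal cover $\bH\times\C$. Given a non-trivial closed positive $(1,1)$-current $T$ on $X$, since $X$ has no compact curves and $b_2(X)=0$, Proposition \ref{prop:decomposition} forces $T$ to be nef, and since the intersection form on $H^{1,1}_{BC}$ is negative semidefinite with isotropic line $\R\{\tau\}$ while $b_2=0$ makes $H^{1,1}_{BC}(X,\R)$ one-dimensional, $\{T\}$ is a multiple of $\{\tau\}$; by Proposition \ref{prop:pluriharmonic} $T$ is then $d$-exact and in particular lies in the Bott–Chern class with a quasi-psh potential. Pulling back to the universal cover one obtains a plurisubharmonic function $\tilde v$ with $i\partial\bar\partial\tilde v=\tilde T$ and some automorphic behaviour under $\pi_1(X)$. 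The key geometric input, following Brunella and the papers \cite{BrunellaGreen1}, \cite{BrunellaGreen2}, is that the foliation by the fibers of $y$ (the "horospherical" foliation on $\bH\times\C$, whose leaves descend to the orbits of $\Gamma$) forces any such psh potential to depend, modulo a pluriharmonic correction, only on $y$: one argues that $\tilde v$ restricted to a generic leaf is a bounded-above psh (hence constant, by the structure of the leaves, which are dense images of $\{w=\text{const}\}$ or are themselves affine copies of $\C$) so that $\tilde v$ is a function of $y$ alone up to adding the real part of a holomorphic function, which on $\bH\times\C$ with the given automorphy must be affine in $w,z$ and then absorbed. Writing $\tilde v$ as a function $F(y)$ and translating the automorphy $\tilde v\circ g_0=\tilde v+(\text{const})$ into $F(\alpha y)=F(y)+c$, i.e. in the variable $u=-\log y$ into $F$ being $p$-periodic plus linear, yields $v=u+h\circ u$ with $h$ continuous $p$-periodic; plurisubharmonicity of $v$ translates into $1+h'+h''\ge0$ as distributions, exactly as in the converse direction of Proposition \ref{prop:Green}(7).

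\medskip

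The main obstacle is precisely this reduction step — showing that a closed positive current on $X$ must come "from the base" via the function $y$. This is where one genuinely invokes the deep results of Brunella on Green functions and the dynamics of the Inoue foliation rather than re-proving them: the point is that the leaves of the foliation tangent to the level sets of $y$ are Zariski-dense (this is a classical feature of Inoue surfaces, the group $\Gamma$ acting on $\{w=\text{const}\}$-leaves with dense orbits in the $y$-level set) so any psh function that is leafwise bounded above must be leafwise constant by the Liouville-type phenomenon, and hence a function of $y$ modulo pluriharmonic terms; controlling the pluriharmonic ambiguity uses that $H^0(X,\Omega^1_X)$ and the automorphic cohomology are as computed for Inoue surfaces. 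Once this is in place everything else is bookkeeping: one checks $v\ge u-C$ so the potential has no worse singularities (none, in fact, since $u=-\log y$ is pluriharmonic and smooth on $\bH\times\C$), concludes $T\in L^2_{-1}(X)$ by local square-integrability of $\partial u$ as in Proposition \ref{prop:Green}(5), and reads off $I(T)=0$ from Proposition \ref{prop:Green}(7). I would therefore present the proof by (i) verifying the Green function axioms directly, (ii) citing Propositions \ref{prop:decomposition} and \ref{prop:pluriharmonic} plus $b_2(X)=0$ to get $d$-exactness, (iii) invoking Brunella's analysis to get the $v=u+h\circ u$ form, and (iv) quoting Proposition \ref{prop:Green}(5)--(7) for the final two assertions.
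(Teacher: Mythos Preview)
Your overall architecture matches the paper's: verify the Green-function axioms for $G=-\hat y$, establish the structural form $T=i\partial\bar\partial(u+h\circ u)$, then read off $L^2_{-1}$-regularity and $I(T)=0$ from Proposition \ref{prop:Green}. The divergence is in how you obtain the structural form and in one missing computational step.

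The paper does not go through Propositions \ref{prop:decomposition} and \ref{prop:pluriharmonic}, nor through Brunella. It quotes \cite[Theorem 82]{HarveyLawson} as a black box: on an Inoue surface every closed positive $(1,1)$-current is of the form $T=(\phi\circ u)\,i\partial\bar\partial u$ with $\phi$ a positive $p$-periodic generalized function (measure) on $\R$. The foliation/density-of-leaves/Liouville argument you sketch is essentially the content of that Harvey--Lawson theorem, so your reasoning is on the right track mathematically, but your attribution is off: the papers \cite{BrunellaGreen1}, \cite{BrunellaGreen2} classify surfaces that \emph{admit} a Green function, they do not describe the cone of positive currents on a given Inoue surface. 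If you actually look in Brunella for this step you will not find it.

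There is also a step you pass over. Once one has $T=(\phi\circ u)\,i\partial\bar\partial u$, one still has to produce the continuous $p$-periodic $h$ with $i\partial\bar\partial(u+h\circ u)=T$, i.e.\ to solve $1+h'+h''=\phi$ (after normalizing so that $\phi$ has mean $1$ over a period). The paper does this explicitly by rewriting the equation as $(e^t+e^th')'=e^t\phi$ and integrating twice. You only observe the easy direction, that plurisubharmonicity of $v$ forces $1+h'+h''\ge 0$; the existence of $h$ given $\phi$ is the part that needs to be written down.
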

\begin{proof}
The fact that $G$ is a Green function without poles is clear. By \cite[Theorem 82]{HarveyLawson} every closed positive $(1,1)$-current $T$ on $X$ is of the form $T=(\phi\circ u)i\partial\bar\partial u$, where $\phi$ is a positive $p$-periodic generalized function on $\R$. We may see $\phi$ as a  $p$-periodic (positive) measure on $\R$ and we may assume that $\phi(]0,p])=p$. In order to find the desired function $h$ it suffices to solve the equation
$$1+h'+h''=\phi$$
on $\R$. For this  remark first that $1+h'+h''=\phi$ is equivalent to $(e^t+e^th')'=e^{t}\phi$. Integrating once gives us a right-continuous increasing function $f:\R\to \R$  of bounded variation such that  $f(x):=(e^{t}\phi)(]0,x])$  for all $x\in \R$, \cite{FunctionBoundedVariation}. A second integration leads to the desired continuous $p$-periodic function $h$. 

The assertion on the regularity of $T$ and on $I(T)$ follows now from Proposition \ref{prop:Green}. 
\end{proof}

\subsubsection{Kato surfaces}\label{subsubsection:Kato}
A {\em Kato surface } is a minimal surface $X$ with $b_1(X)=1$, $b_2(X)>0$ and admitting a {\em global spherical shell}, that is an open neighbourhood $\Sigma$ of the $3$-dimensional sphere $S^3$ in $\C^2\setminus\{0\}$ holomorphically embedded in $X$ and such that $X\setminus\Sigma$ is connected. Their construction is due to Masahide Kato, \cite{Kato}, and their properties have been studied by many authors. 

Any Kato surface $X$ admits exactly $b_2(X)$ rational curves. Conversely, if a minimal non-k\"ahlerian surface $X$ admits $b_2(X)$ rational curves, then $X$ is a Kato surface.

The class of Kato surfaces contains subclasses of previously constructed surfaces known as parabolic Inoue surfaces \cite{InoueParabolic} and Inoue-Hirzebruch surfaces, also called hyperbolic Inoue surfaces \cite{InoueHyperbolic}. We will not use the terminology "`parabolic Inoue"' and "`hyperbolic Inoue"'in order not to create confusion with the already described class of Inoue surfaces. The reader may consult \cite{NakamuraSugaku} for an account of these surfaces. We prefer instead to consider the following subclassification of Kato surfaces: 
\begin{enumerate}
\item {\em Enoki surfaces}, which are non-k\"ahlerian compactifications of affine line bunles over elliptic curves by cycles $D$ of rational curves. Enoki shows that these surfaces are Kato surfaces, that $(D^2)=0$, and that, conversely, any minimal; surface with $b_1=1$, $b_2>0$ and with a non-trivial divisor $D$ with $(D^2)=0$ is in this subclass, \cite{Enoki}.
\item {\em Inoue-Hirzebruch surfaces}, which are Kato surfaces whose rational curves are organized in one or two cycles.
\item {\em Intermediate Kato surfaces}, which are Kato surfaces whose divisor of rational curves is a cycle with at least one branch attached.
\end{enumerate}

\begin{Prop}\label{prop:Kato} 
\begin{enumerate}
\item
On an Enoki surface there exists exactly one exact positive $(1,1)$-current  up to a positive multiplicative factor. This is the integration current along the reduced  divisor of rational curves  of the surface. 
\item 
If  $X$ is an Inoue-Hirzebruch surface or an intermediate Kato surface, then  $X$ admits a Green function $G$. Moreover if $u:=-\log(-G)$ is the associated additively automorphic plurisubharmonic function with $u\circ g=u+p$ and $<g>=\pi_1(X)$, then up to a multiplicative factor any non-trivial exact positive $(1,1)$-current $T$ on $X$ is of the form $T=i\partial\bar\partial v$, where $ v:=u+h\circ u$ for some continuous $p$-periodic function $h:\R\to\R$ satisfying $1+h'+h''\ge0$ as distributions. All such currents are in 
 $L^2_{-1}(X)$ and have $I(T)=0$.
\end{enumerate}
\end{Prop}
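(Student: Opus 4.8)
The plan is to treat the two subclasses of Kato surfaces separately, reducing each to structural results already available. For Enoki surfaces, the first assertion follows from Enoki's description of these surfaces as compactifications of affine $\C^*$-bundles (or affine line bundles) over an elliptic curve by a cycle $D$ of rational curves with $(D^2)=0$. Given an exact positive $(1,1)$-current $T$ on $X$, I would first apply the Siu-type decomposition of Proposition \ref{prop:decomposition} to write $T=\sum_j c_j[E_j]+T'$ with $T'$ nef. Since $b_1(X)=1$, $T$ is exact and hence $\int_X\tau\wedge T=0$, which forces $\int_X\tau\wedge T'=0$, so by Proposition \ref{prop:pluriharmonic} $T'$ is closed and, being nef, $d$-exact. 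The only curves on $X$ are the components $D_j$ of the cycle $D$, whose classes are subject to the relation $(D^2)=0$; combined with the fact that the intersection form on $H^{1,1}_{BC}(X,\R)$ is negative semi-definite with radical $\R\{\tau\}$, one checks that $\{D\}$ is proportional to $\{\tau\}$ and that any effective $\R$-combination of the $D_j$ appearing in an exact current must be a multiple of $[D]$ plus a nef remainder. Finally one rules out a nontrivial nef $d$-exact remainder $T'$: such a $T'$ would restrict to a closed positive current on the open affine bundle $X\setminus D$, and the Liouville-type rigidity of the affine bundle over the elliptic curve (no nonconstant bounded plurisubharmonic functions, equivalently no nontrivial exact positive currents away from $D$) forces $T'=0$. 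Hence $T=c\,[D_{\mathrm{red}}]$.

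For Inoue–Hirzebruch and intermediate Kato surfaces, the strategy is to exhibit the Green function $G$ directly and then invoke Proposition \ref{prop:Green}. These surfaces carry a numerically anticanonical divisor $D$ (the maximal divisor, a cycle of rational curves with at least one branch in the intermediate case, or one or two cycles in the Inoue–Hirzebruch case), with $\pi_1(X)\cong\Z$; passing to the universal cover $X'$, there is a $\pi_1(X)$-automorphic plurisubharmonic exhaustion-type function. Concretely, one uses the known existence of a logarithmic plurisubharmonic function on $X'$ with poles exactly along $\pi^{-1}(D)$ and with the correct multiplicative automorphy with respect to the generator $g$ of $\pi_1(X)$; this is essentially the content of the constructions in \cite{DO}, \cite{BrunellaGreen1}, \cite{BrunellaGreen2}, where such Green functions are produced for exactly these surfaces. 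Exponentiating appropriately gives a negative plurisubharmonic $G:X'\to\,]-\infty,0[$, pluriharmonic off $\pi^{-1}(D)$ and multiplicatively automorphic, i.e.\ a Green function in the sense of Definition \ref{def:Green function}. With $u:=-\log(-G)$ and $p:=-\log k$, parts (1), (5), (6) of Proposition \ref{prop:Green} give that $u$ is plurisubharmonic, additively automorphic, in $L^2_{1,loc}(X')$, with $i\partial\bar\partial u=i\partial u\wedge\bar\partial u$ and $I(i\partial\bar\partial u)=0$.

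It then remains to show every nontrivial exact positive $(1,1)$-current $T$ on $X$ is, up to scale, of the form $i\partial\bar\partial(u+h\circ u)$ for a $p$-periodic $h$ with $1+h'+h''\ge 0$. As in the Inoue case (Proposition \ref{prop:Inoue}), one passes to $X'$, lifts $T$ to a $g$-automorphic exact positive current $\tilde T=i\partial\bar\partial\tilde u$ for some additively automorphic $L^2_{1,loc}$-potential $\tilde u$ (using a result of the type \cite[Proposition 4]{TomaCurrents}), and one must identify $\tilde u$ with $u+h\circ u$ up to an additive constant. The key input is Brunella's analysis: the foliation / level-set structure of $u$ on $X'$ forces $\tilde u$ to be a function of $u$ alone — precisely, $\tilde u = \Phi\circ u$ for a convex-type $\Phi$ whose increments over a period are controlled by the automorphy factor $p$ — so writing $\Phi(t)=t+h(t)$ yields the periodicity of $h$ and the distributional inequality $1+h'+h''\ge0$ from positivity of $T$. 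Once this normal form is established, part (7) of Proposition \ref{prop:Green} gives directly that $T\in L^2_{-1}(X)$ and $I(T)=0$, completing the proof.

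The main obstacle I anticipate is the last step: showing that \emph{every} exact positive current is a function of the single potential $u$, i.e.\ the rigidity statement that there is essentially a one-parameter family of such currents. In the Inoue and Enoki cases this is supplied verbatim by Harvey–Lawson's classification of closed positive currents (\cite[Theorems 69, 82]{HarveyLawson}) and by Enoki's structure theory, but for intermediate Kato surfaces no such ready-made classification of \emph{currents} exists; one has to extract it from Brunella's study of the Green function and its associated singular pluriharmonic foliation on $X\setminus D$, using that the leaves are the level hypersurfaces of $u$ and that a positive current annihilating the transverse direction must be constant along leaves. Making that reduction rigorous — controlling the behaviour of $\tilde u$ near the poles and across the branch components of $D$, where the geometry is genuinely more complicated than in the Inoue–Hirzebruch case — is where the real work lies, and it is exactly the point where we rely on \cite{BrunellaGreen1}, \cite{BrunellaGreen2} and on our later Proposition \ref{prop:hyperbolic}.
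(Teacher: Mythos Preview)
The paper's own proof of this proposition consists entirely of citations: part (1) is quoted from \cite[Theorem 10]{TomaCurrents}, the existence of Green functions in part (2) is attributed to \cite{DO}, and the classification of exact positive currents on Inoue--Hirzebruch and intermediate Kato surfaces is quoted from \cite[Theorems 11 and 12]{TomaCurrents}. Your proposal instead sketches a direct argument, and while the overall shape is reasonable, two of the key steps are not actually established by the references you invoke.

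For the Enoki case, your reduction via Propositions \ref{prop:decomposition} and \ref{prop:pluriharmonic} is fine, and the argument that the divisorial part must be a multiple of $[D_{\mathrm{red}}]$ is correct. The gap is the ``Liouville-type rigidity'' you use to kill the nef residual $T'$: an affine line bundle over an elliptic curve is noncompact and certainly carries nonconstant plurisubharmonic functions, so the claimed equivalence with ``no nontrivial exact positive currents away from $D$'' is not immediate; this vanishing is precisely the nontrivial content of \cite[Theorem 10]{TomaCurrents}. For the Inoue--Hirzebruch and intermediate Kato cases, your citation of \cite{DO} for the Green function matches the paper, and invoking Proposition \ref{prop:Green} for the regularity and $I(T)=0$ of currents of the form $i\partial\bar\partial(u+h\circ u)$ is correct. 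But the hard direction --- showing that \emph{every} exact positive current is of this form --- is not supplied by \cite{BrunellaGreen1}, \cite{BrunellaGreen2} (those papers characterise which surfaces admit Green functions; they do not classify currents on them), and invoking Proposition \ref{prop:hyperbolic} is circular, since its proof explicitly relies on the present proposition. The result you need is \cite[Theorems 11, 12]{TomaCurrents}, which is exactly what the paper cites.
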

\begin{proof}
The first statement is part of \cite[Theorem 10]{TomaCurrents}. The existence of Green functions on intermediate Kato and on Inoue-Hirzebruch surfaces was shown in \cite{DO}. A complete description of the exact positive $(1,1)$-currents on these surfaces was given in \cite[Theorem 11, Theorem 12]{TomaCurrents}.
\end{proof}

\subsection{Hyperbolic and parabolic non-k\"ahlerian surfaces}\label{subsection:new_classification}
The next definition divides the known classes of non-k\"ahlerian surfaces into two groups: {\it parabolic} surfaces and {\it hyperbolic} ones. We will then show  that members of each of these groups have many properties in common. One may speculate which of these properties are better suited to approach the Global Spherical Shell Conjecture.  

\begin{Def}\label{def:hyperbolic-parabolic} 
A non-k\"ahlerian compact complex surface $X$ will be said to be  {\em parabolic} if it belongs to one of the classes: Hopf surfaces, Enoki surfaces, non-k\"ahlerian elliptic surfaces. 
It will be said to be  {\em hyperbolic} if $X$ is either an Inoue surface, an Inoue-Hirzebruch surface, or an intermediate Kato surface.
\end{Def}

This terminology is first used by Inoue in the particular cases of the examples of non-k\"ahlerian surfaces that he constructs in \cite{InoueParabolic} and in \cite{InoueHyperbolic}. Note that non-k\"ahlerian surfaces that are hyperbolic in the above sense are not hyperbolic according to the standard terminology used in complex geometry, as they have many entire curves.

\begin{Prop}\label{prop:hyperbolic} 
If $X$ is a hyperbolic  non-k\"ahlerian surface, then the following assertions hold:
\begin{enumerate}
\item $X$ admits a Green function $G$ such that the function $u=\psi(G):=-\log(-G)$ is in $L^2_{1,loc}$ and
 $$i\partial\bar\partial u=i\partial u\wedge\bar\partial u.$$
\item All exact positive $(1,1)$-currents $T$ are of the form $T=\lambda i\partial\bar\partial ( u+ h\circ u)$ with $\lambda\ge0$, $h:\R\to\R$ a continuous $p$-periodic function  satisfying $1+h'+h''\ge0$ and $p$ the automorphy summand of $u$ as in Proposition \ref{prop:Green}.
Moreover all these currents are in $L^2_{-1}$ and have $I(T)=0$. 
\item The only homologically trivial divisor on $X$ is $0$.
\item $\widetilde{X\setminus D}_{max}\cong\D\times\C$.
\end{enumerate}
\end{Prop}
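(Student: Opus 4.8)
The plan is to treat separately the three types of hyperbolic surface --- Inoue, Inoue--Hirzebruch and intermediate Kato --- and to observe that (1) and (2) have essentially already been recorded. For (1): an Inoue surface carries the pole-free Green function $G=-\hat y$ by Proposition~\ref{prop:Inoue}, and an Inoue--Hirzebruch or intermediate Kato surface carries a Green function $G$ by Proposition~\ref{prop:Kato}(2); in each case the assertion that $u=-\log(-G)$ lies in $L^2_{1,loc}$ and satisfies $i\partial\bar\partial u=i\partial u\wedge\bar\partial u$ is exactly Proposition~\ref{prop:Green}(5). Assertion (2) is then the combination of Proposition~\ref{prop:Inoue} (resp. Proposition~\ref{prop:Kato}(2)), which expresses every exact positive $(1,1)$-current as $\lambda\,i\partial\bar\partial(u+h\circ u)$, with Proposition~\ref{prop:Green}(7), which gives membership in $L^2_{-1}$ and $I(T)=0$.

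For (3), let $D\ge0$ be a homologically trivial divisor. An Inoue surface has no compact curves, so $D=0$. On an Inoue--Hirzebruch or intermediate Kato surface, suppose $D\neq0$; then the integration current $[D]$ is a non-trivial closed positive $(1,1)$-current, and being homologically trivial it is $d$-exact. By (2) it would then lie in $L^2_{-1}(X)$. But this is impossible: along a smooth point of a component of $D$ a local $\partial\bar\partial$-potential of $[D]$ equals $\log|z_1|$ up to a smooth pluriharmonic summand, and the gradient of $\log|z_1|$ is of size $|z_1|^{-1}$, which is not square integrable in $\C^2$; hence $[D]$ has no square-integrable local potentials. So $D=0$. (Alternatively one may invoke negative definiteness of the intersection matrix of the $b_2(X)$ rational curves on these surfaces: homological triviality forces $D\cdot E_j=0$ for every such curve $E_j$, and negative definiteness then forces $D=0$.)

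For (4), the Inoue case is immediate: $D_{max}=\emptyset$ and the universal cover of $X$ is $\bH\times\C\cong\D\times\C$ by construction. Suppose now $X$ is an Inoue--Hirzebruch or intermediate Kato surface, and set $Y:=\widetilde{X\setminus D}_{max}$, the universal cover of the complement of the maximal divisor. The Green function $G$ of (1) is pluriharmonic off $\pi^{-1}(D)$, hence off all compact curves, so it lifts to a negative pluriharmonic function $\tilde G$ on $Y$; since $Y$ is simply connected $\tilde G=\mathrm{Re}\,H$ for a holomorphic map $H\colon Y\to\{\mathrm{Re}<0\}$, and since $\tilde G$ has non-vanishing differential (as $dG$ does away from $\pi^{-1}(D)$) the map $H$ is a submersion. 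Its fibers are the leaves of the associated holomorphic foliation, they are closed in $Y$, and the plan is to show that each is biholomorphic to $\C$ and that $H$ realizes $Y$ as a locally trivial $\C$-bundle over a simply connected planar domain, hence over $\D$; such a bundle is holomorphically trivial, which gives $Y\cong\D\times\C$.

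The main obstacle is precisely this last step --- parabolicity ($\cong\C$) of the leaves of $H$ and local triviality of the fibration for the hyperbolic Kato surfaces. Rather than reproving it I would appeal to the analysis of these surfaces and of their Green functions carried out by Brunella in \cite{BrunellaCylinder}, \cite{BrunellaGreen1}, \cite{BrunellaGreen2} and by Dloussky--Oeljeklaus in \cite{DO}.
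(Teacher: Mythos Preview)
Your treatment of (1) and (2) is exactly the paper's: both assertions are assembled from Propositions~\ref{prop:Green}, \ref{prop:Inoue} and \ref{prop:Kato}.

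For (3) you take a different route. The paper simply invokes the known structure of the curve configuration on each class of hyperbolic surface (citing \cite{NakamuraSugaku}), which in effect is your parenthetical alternative via negative definiteness of the intersection matrix. Your main argument --- that a nonzero homologically trivial $D\ge0$ would give a $d$-exact integration current $[D]$, hence by (2) a current in $L^2_{-1}$, which is absurd since $\log|z_1|$ has non-$L^2$ gradient --- is correct and pleasantly self-contained. Note however that the statement concerns \emph{all} divisors, not only effective ones; your $L^2_{-1}$ argument only excludes $D>0$, and the negative-definiteness argument is what actually settles the general case. So the parenthetical is not optional.

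For (4) there is a referencing issue. The paper does not argue via the Green foliation at all; it records that the isomorphism $\widetilde{X\setminus D_{max}}\cong\D\times\C$ is established directly in the original constructions: \cite{Inoue} for Inoue surfaces, \cite{InoueHyperbolic} for Inoue--Hirzebruch surfaces, and \cite[Theorem~3.7]{DOT3} for intermediate Kato surfaces. The references you cite do not supply this direction: \cite{BrunellaCylinder} proves the \emph{converse} (a non-elliptic non-k\"ahlerian surface with $\widetilde{X\setminus D}\cong\D\times\C$ is hyperbolic), and \cite{DO}, \cite{BrunellaGreen1}, \cite{BrunellaGreen2} concern existence and consequences of Green functions rather than the explicit uniformization of $X\setminus D_{max}$. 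Your sketched argument from $G$ is reasonable heuristics, but since you yourself identify the missing step (parabolicity of leaves and local triviality), and since the literature you point to does not close that gap for intermediate Kato surfaces, you should replace the appeal by the correct sources \cite{Inoue}, \cite{InoueHyperbolic}, \cite{DOT3}.
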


\begin{proof}
The assertions on the Green functions and on the exact positive currents follow from Propositions \ref{prop:Green}, \ref{prop:Inoue} and \ref{prop:Kato}.

The assertion on the homologically trivial divisors follows from the knowledge of the structure of the reduced divisor of curves on these surfaces, cf. \cite{NakamuraSugaku}.  

Finally the facts on the universal cover of $X\setminus D_{max}$ are established in \cite{Inoue}, \cite{InoueHyperbolic} and \cite[Theorem 3.7]{DOT3}. 
\end{proof}

\begin{Prop}\label{prop:parabolic} 
If $X$ is a parabolic  non-k\"ahlerian surface, then the following assertions hold:
\begin{enumerate}
\item $X$ admits no Green function.
\item All exact positive $(1,1)$-currents $T$ in   $L^2_{-1}(X)$ necessarily have $I(T)>0$.
\item There exist homologically trivial divisors $D$ on $X$ with $D>0$, and in particular there exist exact positive $(1,1)$-currents on $X$ not  in $L^2_{-1}$.
\item If the algebraic dimension of $X$ is zero, then $\widetilde{X\setminus D}_{max}\cong\C^2$ and in particular there exists no divisor $D$ on $X$ such that 
$\widetilde{X\setminus D}\cong\D\times\C$ in this case.
\end{enumerate}
\end{Prop}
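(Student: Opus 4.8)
The plan is to go through the three families constituting the parabolic class --- non-k\"ahlerian elliptic surfaces, Hopf surfaces, Enoki surfaces (all minimal by definition; see Subsection~\ref{subsection:historical_classification}) --- and to read off the four assertions from the descriptions of positive exact $(1,1)$-currents already obtained in Propositions~\ref{prop:elliptic}, \ref{prop:Hopf}, \ref{prop:Kato}, together with the structural statements of Proposition~\ref{prop:Green}. The recurring elementary fact used throughout is that the integration current $[D]$ along a non-zero effective divisor $D$ is never in $L^2_{-1}$, since its local $\partial\bar\partial$-potentials are of the type $\log|f|$ and hence not in $L^2_{1,\mathrm{loc}}$.

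For assertion (1): if $X$ is elliptic it carries no Green function by Proposition~\ref{prop:Green}(3). If $X$ is an Enoki surface or a Hopf surface of class~$0$, then every exact positive $(1,1)$-current on $X$ is a non-negative multiple of an integration current along a compact curve (Proposition~\ref{prop:Kato}(1), resp.\ Proposition~\ref{prop:Hopf}(1), the latter applying since exact currents are closed), so $X$ carries no non-trivial exact positive current in $L^2_{-1}$; a Green function would however produce, via Proposition~\ref{prop:Green}(2) and~(5), such a current $i\partial\bar\partial u$, a contradiction. If $X$ is a Hopf surface of class~$1$, a Green function would by Proposition~\ref{prop:Green}(6) produce a non-trivial exact positive current $i\partial\bar\partial u\in L^2_{-1}(X)$ with $I=0$, contradicting Proposition~\ref{prop:Hopf}(2). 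For assertion (2): it is Proposition~\ref{prop:elliptic}(2) when $X$ is elliptic and Proposition~\ref{prop:Hopf}(2) when $X$ is a class~$1$ Hopf surface, and it holds vacuously for Enoki surfaces and class~$0$ Hopf surfaces by the absence, just noted, of non-trivial exact positive currents in $L^2_{-1}$.

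For assertion (3): I would use that a $d$-closed $(1,1)$-current with vanishing de Rham class is $d$-exact (currents compute de Rham cohomology), so that it suffices to exhibit in each family a non-zero effective homologically trivial divisor $D$ --- the current $[D]$ is then positive, $d$-exact, and, by the recurring fact, not in $L^2_{-1}$. A fibre of the elliptic fibration serves by Proposition~\ref{prop:elliptic}(1); on a Hopf surface $b_2=0$, so every divisor, e.g.\ an elliptic curve contained in $X$, is homologically trivial; on an Enoki surface the current $[D_{red}]$ is exact by Proposition~\ref{prop:Kato}(1), hence its de Rham class vanishes and the reduced divisor of rational curves is homologically trivial.

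For assertion (4): among parabolic surfaces, those of algebraic dimension zero are exactly the non-elliptic Hopf surfaces and the Enoki surfaces --- non-k\"ahlerian elliptic surfaces have $a(X)=1$ (being non-Moishezon and mapping onto a curve), while non-elliptic Hopf surfaces and Kato surfaces have $a(X)=0$. For these I would check $\widetilde{X\setminus D}_{max}\cong\C^2$: for a primary non-elliptic Hopf surface $D_{max}$ is the union of its one or two elliptic curves, so $X\setminus D_{max}$ is the quotient of $\C\times\C^*$, resp.\ $(\C^*)^2$, by the cyclic group generated by the contraction, and both have universal cover $\C^2$; secondary Hopf surfaces are finitely covered by primary ones; for an Enoki surface $X\setminus D_{max}$ is, up to removing a section, an affine line bundle over an elliptic curve and therefore has universal cover $\C^2$ after pull-back to the universal cover $\C$ of the base (\cite{Enoki}). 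To exclude a divisor $D$ with $\widetilde{X\setminus D}\cong\D\times\C$, I would take $D$ reduced (so $D\le D_{max}$), note that $U:=X\setminus D_{max}$ is a non-empty connected open subset of $X\setminus D$, and examine a connected component $\Omega_0$ of the preimage of $U$ under the universal covering $\D\times\C\to X\setminus D$: being a connected covering of $U$, it admits a holomorphic covering map from $\widetilde U\cong\C^2$, so composing with $\Omega_0\hookrightarrow\D\times\C$ gives a holomorphic map $\C^2\to\D\times\C$ whose first component, being bounded, is constant by Liouville; then $\Omega_0\subset\{c\}\times\C$, which is impossible for a non-empty open subset of $\D\times\C$. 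I expect the main obstacle to be assertion (4): identifying the algebraic-dimension-zero parabolic surfaces and marshalling the (classical but scattered) descriptions of their $X\setminus D_{max}$; once these are in place, the Liouville dichotomy between $\C^2$ and $\D\times\C$ finishes the proof.
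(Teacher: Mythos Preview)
Your proof is correct. For assertions (2), (3), and (4) you follow essentially the paper's approach, reading the statements off from Propositions~\ref{prop:elliptic}, \ref{prop:Hopf}, \ref{prop:Kato} and the explicit descriptions of the surfaces; you are in fact more explicit than the paper on (4), supplying a Liouville argument to pass from $\widetilde{X\setminus D_{max}}\cong\C^2$ to the non-existence of any $D$ with $\widetilde{X\setminus D}\cong\D\times\C$, whereas the paper leaves this as an ``in particular''.

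For assertion (1) you take a genuinely different route. The paper invokes Brunella's classification results \cite{BrunellaGreen1}, \cite{BrunellaGreen2}, which show that any surface of algebraic dimension zero admitting a Green function is hyperbolic; combined with Proposition~\ref{prop:Green}(3) this disposes of all parabolic cases at once. You instead argue case by case using only the paper's own earlier propositions: Proposition~\ref{prop:Green} manufactures from any Green function a non-trivial exact positive current $i\partial\bar\partial u\in L^2_{-1}(X)$ with $I=0$, and this is directly incompatible with Proposition~\ref{prop:Hopf}(2) for class~$1$ Hopf surfaces, and with Propositions~\ref{prop:Hopf}(1) and~\ref{prop:Kato}(1) for class~$0$ Hopf and Enoki surfaces (where the only exact positive currents are integration currents, hence not in $L^2_{-1}$). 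Your argument has the merit of being self-contained within the paper's current-theoretic framework, showing that (1) does not in fact require Brunella's geometric input; the paper's route is shorter and uniform across the non-elliptic cases, at the cost of importing a substantial external result.
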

\begin{proof}
If a non-k\"ahlerian surface $X$ admits a Green function then $X$ is non-elliptic by Proposition \ref{prop:Green} and thus of algebraic dimension zero. In this case it is shown by Brunella in 
\cite{BrunellaGreen1} and \cite{BrunellaGreen2} that $X$ is necessarily hyperbolic. In particular, parabolic surfaces will not admit Green functions.

The assertions on  the exact positive currents  and on the homologically trivial divisors follow from Propositions  \ref{prop:elliptic}, \ref{prop:Hopf} and \ref{prop:Kato}.

Finally, for the two classes of parabolic surfaces of algebraic dimension zero, namely for non-elliptic Hopf surfaces and for Enoki surfaces, it follows almost from the definition that the universal cover of the complement of the union of compact complex curves is isomorphic to $\C^{2}$. 
\end{proof}

\begin{Th}\label{thm:main}
Non-k\"ahlerian compact complex surfaces $X$ may be classified according to the following table. 
\begin{tabular}{|l|l|l|} 
  \hline
Criterion $C$ & $X$ satisfying $C$  & $X$ not satisfying $C$\\
  \hline
$(D^2)<0$ \ \ $\forall D\in\Div(X)\setminus\{0\}$&hyperbolic, ? & parabolic \\
all exact $(1,1)$-currents on $X$ are in $L^2_{-1}$ & hyperbolic, ? &parabolic,  ?\\
all exact currents  $T\in L^2_{-1}(X)$ have $I(T)=0$ & hyperbolic, ? &parabolic,  ?\\
$X$ admits a Green function& hyperbolic & parabolic,  ?\\
$a(X)=0$ and $\exists D$ with $\widetilde{X\setminus D}\cong\D\times\C$& hyperbolic & parabolic,  ?\\
  \hline
\end{tabular}
The question marks signal that possibly not yet known surfaces may respond to the corresponding criteria.
\end{Th}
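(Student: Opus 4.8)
The plan is to obtain the table by reading off, row by row, the dichotomy between hyperbolic and parabolic surfaces already recorded in Propositions \ref{prop:elliptic}--\ref{prop:parabolic}. There is essentially no fresh computation here, only the task of citing the correct statement for each half of each dichotomy and of supplying the few external structure results needed to justify the entries that bear no question mark.

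For the last four rows I would argue as follows. If $X$ is hyperbolic, Proposition \ref{prop:hyperbolic} provides everything at once: $X$ has a Green function (part 1); every exact positive $(1,1)$-current lies in $L^{2}_{-1}$ and has $I(T)=0$ (part 2); and $\widetilde{X\setminus D}_{max}\cong\D\times\C$ (part 4). Since $X$ is then non-elliptic by Proposition \ref{prop:Green}(3), hence of algebraic dimension zero, the clause $a(X)=0$ of the last criterion is automatic and one takes $D=D_{max}$ there. If $X$ is parabolic, Proposition \ref{prop:parabolic} gives the opposite in each case: no Green function (part 1); a nonzero homologically trivial effective divisor, hence, through its integration current, an exact positive $(1,1)$-current outside $L^{2}_{-1}$ (part 3); $I(T)>0$ for every exact positive $T\in L^{2}_{-1}(X)$ (part 2); and the conjunction in the last row fails vacuously when $a(X)>0$ and by Proposition \ref{prop:parabolic}(4) when $a(X)=0$. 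That the ``satisfying'' column of the Green-function row and of the $\D\times\C$ row is ``hyperbolic'' with no question mark rests on Brunella's theorem, already invoked in the proof of Proposition \ref{prop:parabolic}(1), that a non-k\"ahlerian surface admitting a Green function is hyperbolic, together with the complementary results of \cite{BrunellaGreen1}, \cite{BrunellaGreen2} and \cite{DOT3} linking a splitting $\widetilde{X\setminus D}\cong\D\times\C$ to the existence of such a Green function.

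For the first row I would begin with a reduction: on a minimal non-k\"ahlerian surface the criterion ``$(D^{2})<0$ for all $D\in\Div(X)\setminus\{0\}$'' is equivalent to the absence of a nonzero homologically trivial divisor. For surfaces of class $VII$ this holds because the intersection form on $H^{2}(X,\R)$ is negative definite; for non-k\"ahlerian elliptic surfaces Proposition \ref{prop:elliptic}(1) already makes every effective divisor homologically trivial, so the criterion simply fails there. Granting the reduction, the ``hyperbolic'' entry is Proposition \ref{prop:hyperbolic}(3) (Inoue surfaces being trivial, as they carry no curves at all), and the ``parabolic'' entry follows from Propositions \ref{prop:elliptic}(1), \ref{prop:Hopf} and \ref{prop:Kato}(1), each furnishing a nonzero homologically trivial divisor. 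I expect the main obstacle to be the claim that in this row the ``not satisfying'' column is \emph{exactly} the parabolic class, with no question mark --- equivalently, that every non-k\"ahlerian surface carrying a nonzero homologically trivial divisor is of Hopf, Enoki, or elliptic type. For $b_{2}=0$ this uses the classification of non-k\"ahlerian surfaces with vanishing second Betti number (Hopf or Inoue, the latter having no divisors at all); for $b_{2}>0$ it uses Enoki's theorem \cite{Enoki}, that a minimal surface with $b_{1}=1$, $b_{2}>0$ carrying a nontrivial divisor of vanishing self-intersection is an Enoki surface; the elliptic case is immediate. I would conclude by noting that the unavailability of comparably complete structure results for surfaces not yet known to exist is exactly what the question marks in the remaining rows are meant to record.
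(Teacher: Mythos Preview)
Your overall strategy matches the paper's proof: read off the hyperbolic and parabolic entries from Propositions \ref{prop:hyperbolic} and \ref{prop:parabolic}, then justify the three entries that carry no question mark by invoking outside structure theorems. Your treatment of the first row is more detailed than the paper's (which simply attributes the statement to Enoki \cite{Enoki}), but the substance is the same once one notes, as you do implicitly, that a nonzero divisor with $(D^2)=0$ always yields an \emph{effective} one with the same property.

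There is one gap. For the last row you try to deduce ``$\widetilde{X\setminus D}\cong\D\times\C$ implies $X$ hyperbolic'' by passing through Green functions and citing \cite{BrunellaGreen1}, \cite{BrunellaGreen2}, \cite{DOT3}. But those references do not establish that a $\D\times\C$ uniformization produces a Green function: \cite{DOT3} gives only the forward direction (hyperbolic Kato $\Rightarrow$ $\D\times\C$), and the Brunella papers \cite{BrunellaGreen1}, \cite{BrunellaGreen2} characterize surfaces via Green functions, not via the cylinder uniformization. The paper instead invokes \cite{BrunellaCylinder} directly, where Brunella proves that a non-k\"ahlerian non-elliptic surface whose complement of the maximal divisor is uniformized by $\D\times\C$ must be hyperbolic. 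Replace your indirect route with this citation and the argument goes through.
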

\begin{proof}
The presence of hyperbolic and parabolic surfaces at the indicated places of the table is a consequence of the Propositions \ref{prop:hyperbolic} and \ref{prop:parabolic}. We are left  only with the task of explaining the absence of question marks at three places of the table.

The fact that parabolic surfaces are the only compact complex surfaces admitting homologically trivial divisors $D$ with $D>0$  is due to Enoki, \cite{Enoki}. 

Non-k\"ahlerian non-elliptic surfaces admitting Green functions have been shown to be hyperbolic by Brunella  in 
\cite{BrunellaGreen1} and \cite{BrunellaGreen2}. The case of elliptic surfaces is settled by Proposition \ref{prop:Green}.

Finally, it is again Brunella who proved in \cite{BrunellaCylinder} that hyperbolic surfaces are the only non-k\"ahlerian non-elliptic surfaces whose complement of the maximal divisor of curves is uniformized by $\D\times\C$.
\end{proof}

Combining the Theorem and Proposition \ref{prop:hyperbolic} one immediately gets the following
\begin{Cor}\label{cor:structureGreen}
If $X$ admits a Green function $G$ and  if $u:=-\log(-G)$ is the associated additively automorphic plurisubharmonic function with $u\circ g=u+p$ and $<g>=\pi_1(X)$, then up to a multiplicative factor any non-trivial exact positive $(1,1)$-current $T$ on $X$ is of the form $T=i\partial\bar\partial v$, where $ v:=u+h\circ u$ for some continuous $p$-periodic function $h:\R\to\R$ satisfying $1+h'+h''\ge0$ as distributions. 
\end{Cor}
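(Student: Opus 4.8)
The plan is to deduce the Corollary directly from Theorem \ref{thm:main} together with Proposition \ref{prop:hyperbolic}, so the proof is essentially a bookkeeping argument that combines two already-established facts. First I would recall from the first line of the proof of Proposition \ref{prop:parabolic} (which in turn cites Proposition \ref{prop:Green} together with Brunella's work in \cite{BrunellaGreen1}, \cite{BrunellaGreen2}) that the existence of a Green function $G$ on a non-k\"ahlerian compact complex surface $X$ forces $X$ to be non-elliptic of algebraic dimension zero, and that among such surfaces only the hyperbolic ones (Inoue surfaces, Inoue--Hirzebruch surfaces, intermediate Kato surfaces) admit Green functions; in particular $X$ must be hyperbolic.

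Once $X$ is known to be hyperbolic, I would invoke Proposition \ref{prop:hyperbolic}(1)--(2): $X$ carries a Green function $G_0$ with associated additively automorphic potential $u_0 := -\log(-G_0)$ satisfying $i\partial\bar\partial u_0 = i\partial u_0\wedge\bar\partial u_0$ and $u_0\circ g = u_0 + p_0$ for the generator $g$ of $\pi_1(X)$, and every non-trivial exact positive $(1,1)$-current on $X$ is of the form $\lambda\, i\partial\bar\partial(u_0 + h\circ u_0)$ with $\lambda>0$ and $h$ continuous $p_0$-periodic satisfying $1+h'+h''\ge 0$. The remaining point is to transfer this description from the particular potential $u_0$ produced in Proposition \ref{prop:hyperbolic} to the potential $u$ coming from an arbitrary Green function $G$ on $X$ as in the Corollary's hypothesis. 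Here one uses that on a hyperbolic surface the isotropic Bott--Chern class $\{\tau\}$ is unique up to positive scale and that $i\partial\bar\partial u$ is a non-trivial exact positive current (Proposition \ref{prop:Green}(2)), so by the structure statement $i\partial\bar\partial u = \lambda_0\, i\partial\bar\partial(u_0 + h_0\circ u_0)$ for some $\lambda_0>0$ and admissible $h_0$; comparing the automorphy summands (both $u$ and $u_0+h_0\circ u_0$ are additively automorphic, with summands $p$ and $p_0$ respectively, up to the ambiguity of which generator one picks) pins down the relation between $p$ and $p_0$, and then any $T = i\partial\bar\partial v$ with $v = u + h\circ u$ for $h$ $p$-periodic and $1+h'+h''\ge 0$ can be rewritten in the $u_0$-description and conversely.

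I expect the main obstacle to be precisely this last compatibility step: checking that the class of currents described via $u$ (with $p$-periodic $h$) coincides with the class described via $u_0$ (with $p_0$-periodic data), i.e. that the reparametrization $t\mapsto u_0 = $ (affine function of) $u$ carries $p$-periodic admissible perturbations to $p_0$-periodic admissible ones and back. This amounts to showing that two Green functions on the same hyperbolic surface differ, at the level of their potentials, by an additive-automorphic change that respects the condition $1+h'+h''\ge 0$; in practice this follows because the polar divisors $D$ of any two Green functions on $X$ both equal (a multiple of) the maximal divisor of curves $D_{max}$ — by Proposition \ref{prop:hyperbolic}(3)--(4) and the classification — so the associated $\Z$-coverings and automorphy periods are commensurable, and one reduces to the elementary fact that a $p$-periodic function satisfying $1+h'+h''\ge 0$ corresponds, under $t\mapsto ct+d$, to a $(p/c)$-periodic function satisfying the analogous inequality. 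I would phrase the proof so as to sidestep as much of this as possible by simply noting that Proposition \ref{prop:hyperbolic} already gives the conclusion for \emph{some} Green function and that the Corollary's statement is invariant under the choice, citing the uniqueness of $D_{max}$ and of $\{\tau\}$.
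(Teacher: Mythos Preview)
Your proposal is correct and follows exactly the paper's route: the paper's entire proof is the one-line remark that the Corollary follows by combining Theorem~\ref{thm:main} (a surface admitting a Green function must be hyperbolic) with Proposition~\ref{prop:hyperbolic}(2). Your extra discussion of the compatibility between an arbitrary Green function $G$ and the particular one produced in Proposition~\ref{prop:hyperbolic} is more careful than the paper itself, which treats this passage as immediate and does not comment on it.
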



\section{Perspectives}\label{sec:perspectives}

In this section we wish to briefly discuss a number of conjectures and questions related to the degree of regularity of the $\d$-closed positive $(1,1)$-currents on a compact non-k\"ahlerian surfaces. The leading idea is the same which guided our approach to the study of the K\"aler rank of surfaces in \cite{ChiTo}. In that paper we worked under the assumption that a non-trivial positive smooth exact $(1,1)$-current $T$ exists on a compact complex surface $X$ and we aimed at a classification by distinguishing two cases according to whether $I(T)$ is positive or zero. In the first case we showed that $X$ was necessarily elliptic or Hopf of class $1$. In the second case we proved that $X$ admitted a Green function without poles. This case was afterwards completely settled by Brunella in \cite{BrunellaGreen1}, who showed that such Green functions were only supported by Inoue surfaces. Trying to extend this type of strategy and in view of the striking similarities exhibited by Theorem  \ref{thm:main} for the surfaces which are hyperbolic or respectively parabolic we are led to the following conjectures.

\begin{Conj}
If $X$ is a non-k\"ahlerian surface all of whose exact positive $(1,1)$-currents $T$ are in $L^2_{-1}(X)$ and satisfy $I(T)=0$, then $X$ admits a Green function, and in particular $X$ is hyperbolic.
\end{Conj}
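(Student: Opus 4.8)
\emph{Towards a proof.} The natural plan is to extend the strategy of \cite{ChiTo}, where the case of a \emph{smooth} non-trivial exact positive current with $I(T)=0$ was treated and shown to produce a Green function without poles. First one invokes \cite[Theorem 7.1]{LamariJMPA} to obtain a non-trivial nef $d$-exact $(1,1)$-current $T_0$ on $X$; by hypothesis $T_0\in L^2_{-1}(X)$ and $I(T_0)=0$. Proposition \ref{prop:elliptic} then shows that $X$ cannot be elliptic, so $X$ is a non-elliptic non-k\"ahlerian surface, hence of algebraic dimension zero and with $b_1(X)=1$.

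Next, passing to the $\Z$-covering $\pi:X'\to X$ attached to the canonical surjection $\pi_1(X)\twoheadrightarrow\Z$, one would lift $T_0$ to a global plurisubharmonic, additively automorphic potential $u:X'\to[-\infty,+\infty[$ with $i\partial\bar\partial u=\pi^*T_0$ and $u\circ g=u+p$, where $g$ generates the deck group and $p>0$; that $p\neq0$ is forced, for otherwise $u$ would descend to $X$ and Stokes' theorem would contradict $\int_XT_0\wedge\omega>0$. The existence of such an automorphic potential on $X'$, known for Hopf surfaces by \cite[Proposition 4]{TomaCurrents}, would have to be established in general, and is a first technical point. Since $\bar\partial u$ is $g$-invariant, $S:=i\bar\partial u$ descends to a $(0,1)$-current on $X$ with $T_0=\partial S$ and $i\bar S\wedge S=i\partial u\wedge\bar\partial u=:R$; by Proposition \ref{prop:S}, $R$ is a nef pluriharmonic current and $\int_X\tau\wedge R=I(T_0)=0$, so Proposition \ref{prop:pluriharmonic} gives that $R$ is closed and $d$-exact. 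In terms of currents this is the vanishing $\partial u\wedge T_0=0$, and together with $T_0\wedge T_0=0$ (Proposition \ref{prop:S}) it yields $T_0\wedge R=R\wedge R=0$, so the eikonal structure is "almost in place".

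The crux is then to show that $G:=-e^{-u}$ is a Green function, that is, that $G$ is plurisubharmonic, that $i\partial\bar\partial G$ is an integration current over an analytic divisor, and that $u\in L^2_{1,loc}$. From $i\partial\bar\partial(-e^{-u})=e^{-u}(i\partial\bar\partial u-i\partial u\wedge\bar\partial u)=e^{-u}(T_0-R)$ one sees that plurisubharmonicity of $G$ amounts to the eikonal inequality $T_0\ge R$, and pluriharmonicity of $G$ off the poles of $u$ to the eikonal identity $i\partial\bar\partial u=i\partial u\wedge\bar\partial u$ there. I expect this to be the decisive and, to our knowledge, genuinely open step. The relation $\partial u\wedge T_0=0$ says that $T_0$ is diffuse along the singular holomorphic foliation $\cF$ whose local leaves are the level sets of $u$ (as in the proof of Proposition \ref{prop:Hopf}); the plan would be to feed this, together with the full hypothesis that \emph{every} exact positive $(1,1)$-current on $X$ lies in $L^2_{-1}$ with vanishing $I$, into the foliation-theoretic machinery of Brunella \cite{BrunellaCylinder}, \cite{BrunellaGreen1}, \cite{BrunellaGreen2}, so as to exclude the "non-eikonal" configurations and pin down the structure of $\cF$ and of $u$. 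Once the eikonal identity is available, the divisorial nature of $\{u=+\infty\}$ and the regularity $u\in L^2_{1,loc}$ should follow exactly as in the proof of Proposition \ref{prop:Green}, using $a(X)=0$, the regularization theorem of \cite{DemaillyPaun}, and the analysis in \cite[pp.~252--253]{BrunellaGreen2}.

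Finally, once $G$ is shown to be a Green function, the results of Brunella in \cite{BrunellaGreen1} and \cite{BrunellaGreen2} — equivalently the row of Theorem \ref{thm:main} concerning Green functions — force $X$ to be hyperbolic, completing the argument. The main obstacle is thus the passage from the soft current-theoretic hypotheses to the rigid Green-function geometry: one must rule out, without invoking the Global Spherical Shell Conjecture, the a priori possibility of a surface satisfying the hypotheses on which no automorphic plurisubharmonic potential solves $i\partial\bar\partial u=i\partial u\wedge\bar\partial u$.
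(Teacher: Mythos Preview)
The statement you are addressing is \emph{Conjecture 4.1} in the paper, placed in the ``Perspectives'' section, and the paper offers no proof or even a sketch of a proof for it. There is therefore nothing in the paper to compare your proposal against: the authors present this as an open problem motivated by the dichotomy in Theorem~\ref{thm:main} and by the analogy with the smooth case treated in \cite{ChiTo}.

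That said, your write-up is appropriately framed as a programme (``Towards a proof'') rather than a proof, and you correctly isolate the decisive open step: passing from the soft information $I(T_0)=0$, equivalently the $d$-exactness of $R=i\partial u\wedge\bar\partial u$, to the pointwise eikonal identity $i\partial\bar\partial u=i\partial u\wedge\bar\partial u$ off the poles of $u$. The preparatory steps you list --- ruling out the elliptic case via Proposition~\ref{prop:elliptic}, producing an additively automorphic potential on the $\Z$-cover, and invoking Propositions~\ref{prop:S} and \ref{prop:pluriharmonic} to get $R$ closed and exact --- are sound and are exactly the ingredients the paper makes available. One small caution: the existence of a global automorphic $i\partial\bar\partial$-potential on $X'$ is not established in the paper in the generality you need (you cite \cite[Proposition~4]{TomaCurrents}, which is specific to certain classes), so this really is a first technical gap and not merely a citation to be filled in. The second and principal gap, the eikonal step, is genuinely open; your suggestion to feed the foliation $\cF$ into Brunella's machinery is natural but speculative, and the paper gives no indication that the authors know how to carry it out either.
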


\begin{Conj}
If $X$ is a non-k\"ahlerian surface all of whose exact positive $(1,1)$-currents $T$ are in $L^2_{-1}(X)$ but do not all satisfy $I(T)=0$,   then $X$ admits a cycle of rational curves.
\end{Conj}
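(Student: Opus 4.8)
The plan is to attack the conjecture in the spirit of our earlier paper \cite{ChiTo} and of Brunella's articles \cite{BrunellaCylinder}, \cite{BrunellaGreen1}, \cite{BrunellaGreen2}, splitting non-k\"ahlerian surfaces according to the behaviour of $I$ and extracting a compact curve from the positive current that witnesses $I>0$. One first reduces to a very restricted class of surfaces. If $X$ were elliptic non-k\"ahlerian it would carry, by Proposition \ref{prop:elliptic}(1), an exact positive $(1,1)$-current not in $L^2_{-1}(X)$, against the hypothesis; the same happens for Hopf surfaces and for Enoki surfaces, each of which carries a homologically trivial positive divisor and hence, by the proof of Proposition \ref{prop:parabolic}, an exact positive current not in $L^2_{-1}(X)$. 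On the other hand Inoue, Inoue--Hirzebruch and intermediate Kato surfaces are hyperbolic, so by Proposition \ref{prop:hyperbolic}(2) all their exact positive currents satisfy $I(T)=0$, again excluded. Thus a surface $X$ fulfilling the hypothesis is none of the known surfaces; being non-k\"ahlerian and non-elliptic it lies in Kodaira's class $VII$, and up to passing to its minimal model (the hypotheses being stable under blow-up) we may assume $X$ minimal with $b_2(X)>0$ and $a(X)=0$. The conjecture thereby becomes a Global-Spherical-Shell--type statement for these surfaces under the extra current-theoretic hypothesis, and the goal is to produce on $X$ a cycle of rational curves.

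Next one uses the hypothesis ``not all $I(T)=0$'' to fix an exact positive current $T_0=\partial S_0$ with $S_0\in L^2(X)$ and $I(T_0)=\int_X\tau\wedge i\bar S_0\wedge S_0>0$. Since $T_0\in L^2_{-1}(X)$, Proposition \ref{prop:S} and the Bedford--Taylor theory give $T_0\wedge T_0=0$ and $i\partial\bar\partial(i\bar S_0\wedge S_0)=0$, so the local $\partial\bar\partial$-potentials $u$ of $T_0$ solve the homogeneous complex Monge--Amp\`ere equation $(i\partial\bar\partial u)^2=0$ and the kernels of their Levi forms cut out, on the locus where $T_0\neq0$, a line field on $X$. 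The core step is to show that this line field is the tangent distribution of a (possibly singular) holomorphic foliation $\cF$ on all of $X$, carrying the transverse invariant measure induced by the closed current $T_0$, and with the additional positivity that the nef pluriharmonic current $i\bar S_0\wedge S_0$ of Proposition \ref{prop:S} charges no compact curve. Once $\cF$ is at hand one would invoke the classification of holomorphic foliations on class $VII$ surfaces and the theory of foliations with a transverse invariant measure (Brunella, McQuillan) to produce an $\cF$-invariant compact curve $C$; by the hypothesis and Enoki's theorem $C$ is not homologically trivial, so by the structure of the curve configurations on minimal class $VII$ surfaces (Nakamura, Dloussky--Oeljeklaus--Toma) the union of the $\cF$-invariant curves contains a cycle, which consists of rational curves since $a(X)=0$.

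The hard part, exactly as in Brunella's treatment of the Green-function case, will be a combination of regularity and of the exclusion of exceptional minimal sets. On the regularity side, $T_0$ is only assumed to lie in $L^2_{-1}(X)$ rather than to be smooth as in \cite{ChiTo}, so integrating the Monge--Amp\`ere kernel into a genuine holomorphic foliation requires a careful local analysis, and it may be necessary first to normalise $\cF$ along the lines of the birational theory of singular foliations. On the dynamical side, even granting $\cF$, one must rule out that the transverse invariant measure provided by $T_0$ is concentrated on a compact $\cF$-minimal set containing no algebraic leaf; excluding such exceptional minimal sets on non-k\"ahlerian surfaces rests on the negative semi-definiteness of the intersection form on $H^{1,1}_{BC}(X,\R)$ and on the dynamics of the $\Z$-covering attached to $\tau$, and adapting these arguments to the present, less regular, situation --- and then promoting a single invariant curve to a full cycle of rational curves --- is where the substantive work lies. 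A successful treatment would in particular establish the Global Spherical Shell Conjecture for the surfaces singled out by the hypothesis.
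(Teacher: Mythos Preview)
The statement you are addressing is listed in the paper as a \emph{Conjecture} in Section~\ref{sec:perspectives}; the paper offers no proof, only motivation. So there is no ``paper's own proof'' to compare your proposal against, and your text should be read as a proposed strategy rather than a proof.

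As a strategy, your reduction step is correct and is exactly the content of Propositions~\ref{prop:hyperbolic} and~\ref{prop:parabolic}: no known non-k\"ahlerian surface satisfies the hypothesis, so any $X$ fulfilling it is an as yet unknown minimal class~$VII$ surface with $b_2>0$. But this is precisely why the statement is a conjecture and not a theorem: from this point on you are working on a surface about which almost nothing is known, and the remaining steps are not proofs but wishes. Two concrete gaps stand out. First, the passage from an exact positive current $T_0\in L^2_{-1}(X)$ with $T_0\wedge T_0=0$ to a holomorphic foliation is not justified. In \cite{ChiTo} this worked because the current was \emph{smooth}, so the kernel of its Levi form was a well-defined $\cC^\infty$ line field that one could then show to be holomorphic; for a current that is merely in $L^2_{-1}$ the Levi form need not have pointwise meaning, its ``kernel'' is not a distribution in any obvious sense, and no regularity mechanism is proposed to bridge this. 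Second, even granting a foliation $\cF$ with transverse invariant measure, the step ``invoke the classification of foliations on class~$VII$ surfaces to produce an invariant compact curve'' is circular: that classification (Brunella, Dloussky--Oeljeklaus) is established only for the \emph{known} class~$VII$ surfaces, and on an unknown surface the existence of invariant curves and the exclusion of exceptional minimal sets are open. You acknowledge both issues in your final paragraph, which is honest, but it means the proposal is a programme rather than a proof; a referee would not accept it as settling the conjecture.
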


\begin{Conj}
If $X$ is a non-k\"ahlerian surface admitting an exact positive $(1,1)$-current not in $L^2_{-1}(X)$, then there exists on $X$ some exact positive current $T$ with a non-vanishing Lelong number at at least one point of $X$.
\end{Conj}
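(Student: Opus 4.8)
Here is a plan for establishing the statement.

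The plan is to reduce the conjecture, via the Siu-type decomposition, to one genuinely hard case. Let $S$ be an exact positive $(1,1)$-current on $X$ with $S\notin L^2_{-1}(X)$, and write, by Proposition~\ref{prop:decomposition}, $S=\sum_jc_j[E_j]+S'$ with $c_j\ge0$, $E_j$ irreducible compact curves and $S'$ nef. Since $\{\tau\}$ lies in the radical of the negative semi-definite intersection form on $H^{1,1}_{BC}(X,\R)$ recalled in Section~\ref{subsection:ph}, it pairs trivially with the Bott--Chern class of every $d$-closed positive $(1,1)$-current $R$, i.e.\ $\int_X\tau\wedge R=0$. Applying this to $R=S$ (which is $d$-closed, being exact) and to each $R=[E_j]$ gives $\int_X\tau\wedge S'=0$, so $S'$ is $d$-exact by Proposition~\ref{prop:pluriharmonic}. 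Consequently $\sum_jc_j[E_j]=S-S'$ is itself an exact positive $(1,1)$-current.

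Now if some $c_{j_0}>0$, then $T:=\sum_jc_j[E_j]$ is exact, positive, and $\nu(T,p)\ge c_{j_0}>0$ for every $p\in E_{j_0}$, so the conjecture holds with this $T$. We may therefore assume that all $c_j$ vanish, i.e.\ that $S=S'$ is nef, exact, positive and not in $L^2_{-1}(X)$; and if this $S$ carried a positive Lelong number at any point the conjecture would again hold, so we may also assume $\nu(S,\cdot)\equiv0$. Everything thus comes down to proving the following: \emph{a non-k\"ahlerian surface carrying a nef exact positive $(1,1)$-current $S\notin L^2_{-1}(X)$ with $\nu(S,\cdot)\equiv0$ also carries some exact positive $(1,1)$-current with a strictly positive Lelong number at a point.}

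The line of attack I would pursue for this reduced statement is to show that such an $S$ forces $X$ to admit a homologically trivial effective divisor $D>0$; by Enoki's theorem \cite{Enoki} this is equivalent to $X$ being parabolic, and then Proposition~\ref{prop:parabolic}(3) supplies the integration current $[D]$, which is exact, positive and has Lelong number $\ge1$ along $D$. One cannot hope to exclude currents of the type of $S$ outright: on a primary Kodaira surface $\pi:X\to Y$ over an elliptic curve the pull-back $\pi^*\mu$ of a suitable continuous singular positive measure $\mu$ of infinite logarithmic energy on $Y$ is nef, exact (the fibres of $\pi$ being homologically trivial by Proposition~\ref{prop:elliptic}(1)), positive, outside $L^2_{-1}(X)$, and of vanishing Lelong numbers. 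The task is therefore to rule them out precisely when $X$ supports no current of integration along a compact curve. Concretely one would examine a local $\partial\bar\partial$-potential $u$ of $S$ and try to show that the failure $\nabla u\notin L^2_{loc}$, combined with the vanishing of all Lelong numbers of $u$, forces the ``energy'' of $u$ to concentrate along a closed --- hence compact --- analytic set, which by exactness of $S$ would then be homologically trivial.

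The main obstacle is exactly this last step: extracting an honest compact curve from the quantitative failure of $L^2_1$-regularity in the absence of Lelong numbers. On an arbitrary compact non-k\"ahlerian surface this seems to require new input beyond local potential theory --- presumably a dynamical analysis of the (possibly singular) holomorphic foliation associated with $S$, in the spirit of Brunella's work used in Theorem~\ref{thm:main}. Pending such an argument, the conjecture holds for every presently known non-k\"ahlerian surface: on hyperbolic ones every exact positive current already lies in $L^2_{-1}$ by Proposition~\ref{prop:hyperbolic}(2), so the hypothesis forces $X$ parabolic, and then Proposition~\ref{prop:parabolic}(3) applies.
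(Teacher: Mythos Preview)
The statement you are attacking is a \emph{conjecture} in the paper: the authors do not prove it and offer no argument towards it. The only comment the paper appends is a remark in the opposite logical direction: assuming the \emph{conclusion} holds (some exact positive current $T$ with a positive Lelong number exists), they observe via \cite[Theorem~A.1]{TelemanFamilies} and the Siu decomposition that $X$ must then be parabolic. Your proposal, by contrast, tries to deduce parabolicity from the \emph{hypothesis} and then close the loop with Proposition~\ref{prop:parabolic}(3); this is a genuine attempt at the conjecture itself, which the paper does not undertake.

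Your reduction is sound. The decomposition step and the conclusion that $\sum_j c_j[E_j]$ is itself $d$-exact are correct (in fact $\int_X\tau\wedge R=0$ for any $d$-closed current $R$ follows already from $\tau$ being $d$-exact as a smooth form, without invoking the radical), and the dichotomy on whether some $c_j>0$ is clean. Your verification that the conjecture holds for all known surfaces is also correct, and the Kodaira-surface example you give is a useful reality check showing that the residual case cannot simply be excluded. But you rightly isolate that residual case --- a nef, exact, positive current $S\notin L^2_{-1}(X)$ with $\nu(S,\cdot)\equiv 0$ on an unknown surface --- as the real obstruction, and you do not resolve it. So what you have is a correct reduction plus a confirmation on known examples, not a proof; this is already more than the paper provides, but the conjecture remains open.
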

Note that in this case the surface $X$ would be parabolic. Indeed, for such a current the Lelong-Siu level sets $E_{c}(T)$ cannot all be zero-dimensional, by  \cite[Theorem A.1]{TelemanFamilies}. Thus supposing that $X$ is non-elliptic, we would get $T=[C]+R$ with $C$ a curve and $R$ is residual, and $R$ would be $\d$-closed and nef by Proposition \ref{prop:decomposition} and thus $\d$-exact. Therefore $[C]$ would also be $d$-exact, which implies the parabolicity of $X$.


\section{Appendix}\label{sec:appendix}
Since the notion of nef current is not used frequently in the literature we present here some properties relating it to the more common notion of nef class.

As before, also in this section we denote by $X$ a compact non-k\"ahlerian surface. 

Let $\cE^{p,q}$ and $\cD'^{p,q}$ be the sheaves of germs of smooth $(p,q)$-forms and respectively
of bidegree $(p,q$-currents on $X$. We will write $\cE^{p,q}_{\R}$ and $\cD'^{p,q}_{\R}$ for the subsheaves of real forms, and respectively real currents.
We will be interested in the real Bott-Chern and Aeppli cohomolgy groups of bidegree $(1,1)$ on $X$. They may be defined using either global forms or global currents. We recall their definition in terms of forms:
$$H^{1,1}_{BC}(X,\R):=\{\eta\in\cE_{\R}^{1,1}(X) \ | \ \d\eta=0\}/i\partial\bar\partial\cE_{\R}^{0,0}(X),$$
$$H^{1,1}_{A}(X,\R):=\{\eta\in\cE_{\R}^{1,1}(X) \ | \ i\partial\bar\partial\eta=0\}/\{\bar\partial S + \partial\bar S \ | \  S\in \cE^{1,0}(X) \}.$$
The evaluation of currents on forms gives a duality between these two spaces. 
We also get natural comparison morphisms to and from the second de Rham cohomology group:
$$ H^{1,1}_{BC}(X,\R)\to H^{2}_{dR}(X,\R), \ H^{2}_{dR}(X,\R)\to H^{1,1}_{A}(X,\R).$$ We denote the image of the first one by $H^{1,1}_{dR}(X,\R)$. We clearly have
$$H^{1,1}_{dR}(X,\R)=\{\eta\in\cE_{\R}^{1,1}(X) \ | \ \d\eta=0\}/\{\eta\in\cE_{\R}^{1,1}(X) \ | \ \eta=\d\phi, \ \phi\in \cE_{\R}^{1}(X) \}.$$
It is known that  
$\Ker (H^{1,1}_{BC}(X,\R)\to H^{1,1}_{dR}(X,\R))$ and $\Coker (H^{1,1}_{dR}(X,\R)\to H^{1,1}_{A}(X,\R))$ are $1$-dimensional, \cite{BHPV}. They are generated by $\{\tau\}_{BC}$ and by the image of $\{\omega\}_{A}$ respectively. It is also known that the intersection form on $H^{1,1}_{dR}(X,\R)$ is negative definite, \cite{BHPV}. 

We next define ``positive'' convex cones in $H^{1,1}_{BC}(X,\R)$ and in $H^{1,1}_{A}(X,\R)$ by
$$\Psef_{BC}(X):=\{ \{ T\}_{BC} \ | \ T\in {\cD'}_{\R}^{1,1}(X), \ \d T=0, \ T\ge0\},$$
$$\Psef_{A}(X):=\{ \{ T\}_{A} \ | \ T\in {\cD'}_{\R}^{1,1}(X), \ i\partial\bar\partial T=0, \ T\ge0\},$$
$$\Nef_{BC}(X):=\{ \{ T\}_{BC} \ | \ T\in {\cD'}_{\R}^{1,1}(X), \ \d T=0, \ T \ \text{nef}\},$$
$$\Nef_{A}(X):=\{ \{ T\}_{A} \ | \ T\in {\cD'}_{\R}^{1,1}(X), \ i\partial\bar\partial T=0,  \ T \ \text{nef}\}.$$
We also denote by $G$ the set of Aeppli cohomology classes of Gauduchon forms on $X$.

\begin{Prop}\label{prop:psef-nef} 
\begin{enumerate}
\item $\Psef_{BC}(X)$ and $\Nef_{BC}(X)$ are closed in $H^{1,1}_{BC}(X,\R)$.
\item 
$\Nef_{BC}(X)=\{\alpha\in H^{1,1}_{BC}(X,\R) \ | \ \forall \epsilon>0 \ \exists \eta_{\epsilon}\in \alpha\cap\cE_{\R}^{1,1}(X) \ \eta_{\epsilon}\ge-\epsilon\omega\}$.
\item 
$\Nef_{BC}(X)=\R_{\ge0}\{\tau\}_{BC}$.
\item
 If the Bott-Chern cohomology class of a positive closed current $T$ is in $\Nef_{BC}(X)$, then $T$ is nef.
\item 
$\Nef_{BC}(X)=\Psef_{A}(X)^{*}$ and 
$\Psef_{BC}(X)\setminus\{0\}=\{\alpha \in H^{1,1}_{BC}(X,\R) \ | \ \langle\alpha,\eta\rangle>0 \ \forall \eta\in G\}$.
In particular
$\Psef_{BC}(X) = \Nef_{A}(X)^{*}$,
$\overline{\Psef_{A}(X)} = \Nef_{BC}(X)^{*}$ and
$\overline{\Nef_{A}(X)}=\Psef_{BC}(X)^{*}$,
\item
$G$ is open and $\overline{\Nef_{A}(X)}=\overline G$.
\item
If $E_{j}$ are the irreducible curves of negative self-intersection on $X$, then 
$$\Psef_{BC}(X)=\Nef_{BC}(X)+\sum_{j}\{[E_{j}]\}_{BC}$$
and
$$\overline{\Psef_{A}(X)}=\overline{\Nef_{A}(X)}+\sum_{j}\{[E_{j}]\}_{A}.$$
\item
$\overline{\Nef_{A}(X)}={\{\alpha\in H^{1,1}_{A}(X,\R) \ | \ \forall \epsilon>0 \ \exists \eta_{\epsilon}\in \alpha\cap\cE_{\R}^{1,1}(X) \ \eta_{\epsilon}>-\epsilon\omega\}}$.
\end{enumerate}
\end{Prop}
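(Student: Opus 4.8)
The plan is to first establish (3), which pins the Bott--Chern nef cone down to a half--line and makes the rest essentially formal, and then to read off (1), (2) and (4) and the duality statements (5)--(8) from it together with Proposition \ref{prop:decomposition} and the pairing between $H^{1,1}_{BC}(X,\R)$ and $H^{1,1}_{A}(X,\R)$. For the inclusion ``$\subseteq$'' in (3): if $T$ is nef and closed, write $T=\lim\omega_{n}$ with $\omega_{n}$ Gauduchon; then $\int_{X}T\wedge\omega=\lim\int_{X}\omega_{n}\wedge\omega\ge0$ and, exactly as in the proof of Proposition \ref{prop:pluriharmonic}, $\{T\}_{BC}^{2}=\int_{X}T\wedge T=\lim\int_{X}T\wedge\omega_{n}\ge0$; since the intersection form on $H^{1,1}_{BC}(X,\R)$ is negative semi--definite with totally isotropic line $\R\{\tau\}_{BC}$, this forces $\{T\}_{BC}\in\R_{\ge0}\{\tau\}_{BC}$. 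For ``$\supseteq$'' one rescales Lamari's nef $\d$--exact current to represent $\{\tau\}_{BC}$ and notes that $0$ is nef. Then (3) makes $\Nef_{BC}(X)$ visibly closed, and for $\Psef_{BC}(X)$ closed I would use the usual compactness: given $\{T_{n}\}_{BC}\to\alpha$ with $T_{n}\ge0$ closed, pick smooth closed representatives $\beta_{n}\in\{T_{n}\}_{BC}$ in a fixed compact slice (so $\beta_{n}\ge-C\omega$ uniformly), write $T_{n}=\beta_{n}+i\partial\bar\partial\varphi_{n}$ with $i\partial\bar\partial\varphi_{n}$ bounded below, normalise and extract $L^{1}$--limits of the $\varphi_{n}$, obtaining a positive closed representative of $\alpha$.

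For (2), write $\cN$ for the right--hand side. If $\alpha\in\cN$ with smooth closed witnesses $\eta_{\epsilon}\ge-\epsilon\omega$, then $\eta_{\epsilon}+\epsilon\omega\ge0$ pointwise on the surface $X$, so $0\le\int_{X}(\eta_{\epsilon}+\epsilon\omega)^{2}=\alpha^{2}+2\epsilon\langle\alpha,\{\omega\}_{A}\rangle+\epsilon^{2}\int_{X}\omega^{2}$; letting $\epsilon\to0$ gives $\alpha^{2}\ge0$, hence $\alpha^{2}=0$ and $\alpha\in\R\{\tau\}_{BC}$, while $\langle\alpha,\{\omega\}_{A}\rangle\ge-\epsilon\int_{X}\omega^{2}$ for all $\epsilon$ fixes the sign; so $\cN\subseteq\R_{\ge0}\{\tau\}_{BC}=\Nef_{BC}(X)$. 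Conversely $0\in\cN$, and for $t>0$ the plan is to take a quasi--plurisubharmonic potential of a nef closed current representing $t\{\tau\}_{BC}$ and apply the regularisation \cite[Theorem 3.2]{DemaillyPaun}, as in the proof of Proposition \ref{prop:decomposition}, together with the fact that $\tau$ and $\omega$ are both $i\partial\bar\partial$--closed, to produce smooth $\eta_{\epsilon}\in t\{\tau\}_{BC}$ with $\eta_{\epsilon}\ge-\epsilon\omega$.

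Part (4) is where I expect the real work. Let $T$ be positive and closed with $\{T\}_{BC}\in\R_{\ge0}\{\tau\}_{BC}$. If $\{T\}_{BC}=0$ then $T=i\partial\bar\partial\varphi\ge0$ forces $\varphi$ plurisubharmonic on the compact connected $X$, so $T=0$, which is nef. Otherwise apply Proposition \ref{prop:decomposition}: $T=\sum_{j}c_{j}[E_{j}]+T'$ with $T'$ nef and, since $T$ is closed, $T'$ closed; then $\{T'\}_{BC}\in\R_{\ge0}\{\tau\}_{BC}$ by (3), so $D:=\sum_{j}c_{j}E_{j}$ has zero image in $H^{1,1}_{dR}(X,\R)$, i.e.\ is homologically trivial. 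If $D=0$ we are done, since nef currents form a convex cone stable under addition. If $D\neq0$, then by Enoki's theorem \cite{Enoki} the surface $X$ is parabolic --- a Hopf, an Enoki, or a non--k\"ahlerian elliptic surface --- and in each of these classes one checks directly that $[D]$ is nef: for Hopf surfaces by an explicit automorphic approximation as in \cite{HarveyLawson}, for Enoki surfaces because $[D]$ is, up to a positive factor, the nef $\d$--exact current of the surface (Proposition \ref{prop:Kato}), and for elliptic quasi--bundles by pulling back to $X$ smooth positive forms on the base curve that approximate the relevant Dirac masses. The genuine difficulty here is precisely that the hypothesis only constrains the cohomology class, so the passage to the current must use the structure of the (necessarily parabolic) surfaces carrying non--trivial homologically trivial effective divisors.

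Parts (5)--(8) are then formal, using (2), (3) and the perfect pairing $\langle\,\cdot\,,\,\cdot\,\rangle$. For $\Nef_{BC}(X)=\Psef_{A}(X)^{*}$: an $\alpha$ pairing non--negatively with $\Psef_{A}(X)$ pairs non--negatively with all Gauduchon classes, so \cite[Lemme 1.4]{LamariJMPA} applied to a smooth closed representative gives $\alpha=\{T\}_{BC}$ with $T\ge0$ closed; pairing $\alpha$ with its own Aeppli class gives $\alpha^{2}=\langle\{T\}_{BC},\{T\}_{A}\rangle\ge0$, hence $\alpha^{2}=0$ and $\alpha\in\R_{\ge0}\{\tau\}_{BC}$; the reverse inclusion uses (2) to bound $\langle\{\tau\}_{BC},\{S\}_{A}\rangle\ge-\epsilon\,(\text{mass of }S)$ for each positive pluriharmonic $S$. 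The description of $\Psef_{BC}(X)\setminus\{0\}$ is immediate from strict positivity of Gauduchon metrics and \cite[Lemme 1.4]{LamariJMPA}, and the remaining identities of (5) are the bipolar theorem together with (6). For (6), openness of $G$ is the standard openness of the Gauduchon cone (continuity of Aeppli--harmonic representatives and openness of strict positivity), and $\overline{\Nef_{A}(X)}=\overline{G}$ holds because $G\subseteq\Nef_{A}(X)$, while a nef current $T=\lim\omega_{n}$ has finite mass, so passing to a convergent sequence of Aeppli classes with controlled primitives yields $\{T\}_{A}=\lim\{\omega_{n}\}_{A}\in\overline{G}$. Part (7) follows from Proposition \ref{prop:decomposition} and the observation that an irreducible curve $E$ with $E^{2}\ge0$ has $\{[E]\}_{BC}^{2}=E^{2}\ge0$, hence $\{[E]\}_{BC}$ isotropic and in $\Nef_{BC}(X)$, so only negative curves survive. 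Finally for (8): ``$\supseteq$'' is one line given (5), since representatives $\eta_{\epsilon}>-\epsilon\omega$ of $\alpha$ give $\langle\{T\}_{BC},\alpha\rangle=\int_{X}T\wedge\eta_{\epsilon}>-\epsilon\int_{X}T\wedge\omega$ for every positive closed $T$, forcing $\alpha\in\Psef_{BC}(X)^{*}=\overline{\Nef_{A}(X)}$; and ``$\subseteq$'' uses (6): given $\alpha=\lim\{\gamma_{n}\}_{A}$ with $\gamma_{n}$ Gauduchon, for fixed $\epsilon$ add to a far enough $\gamma_{n}$ the uniformly small harmonic representative of $\alpha-\{\gamma_{n}\}_{A}$ to obtain a smooth representative of $\alpha$ that is $>-\epsilon\omega$.
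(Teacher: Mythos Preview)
Your overall strategy---pin down (3) first by the self-intersection computation from Proposition \ref{prop:pluriharmonic}, then read off (1), (2), (4)---is a genuinely different route from the paper's. The paper instead establishes (2) before (3): it quotes \cite[Proposition 4.1]{LamariJMPA} for $P_{nef}(X)\subset\Nef_{BC}(X)$ and proves the reverse inclusion by duality, showing that a closed nef current pairs non-negatively with every positive pluriharmonic current via Proposition \ref{prop:decomposition} and an auxiliary intersection lemma (Lemma \ref{lem:intersection}) controlling $\lim_{n,m}\langle\omega_n,\omega'_m\rangle$ for two sequences of Gauduchon forms. Then (3) is quoted from \cite[Th\'eor\`eme 7.1]{LamariJMPA}. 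Your approach trades Lemma \ref{lem:intersection} for the direct observation $\{T\}_{BC}^2\ge0$, which is cleaner; for (1), (5)--(8) the two arguments are close in spirit, though for (8) the paper invokes closedness of the right-hand set from \cite[Lemma 2.3]{ChiRasSuv} rather than your harmonic-correction trick.

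There is one real gap: your ``$\supseteq$'' in (2). Having (3) in hand, you need $\{\tau\}_{BC}\in\cN$, i.e.\ smooth \emph{closed} representatives $\eta_\epsilon\ge-\epsilon\omega$. The regularisation \cite[Theorem 3.2]{DemaillyPaun}, as used in Proposition \ref{prop:decomposition}, produces approximants with \emph{analytic singularities}, not smooth forms; Demailly's smooth convolution regularisation, on the other hand, loses positivity proportional to the Lelong numbers of the chosen nef current, and you have not argued that a nef closed representative of $\{\tau\}_{BC}$ has vanishing Lelong numbers (Proposition \ref{prop:decomposition} only kills Lelong numbers along curves, not at isolated points). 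The quickest fix is simply to quote \cite[Th\'eor\`eme 7.1]{LamariJMPA}, which asserts precisely $\R_{\ge0}\{\tau\}_{BC}=P_{nef}(X)$; alternatively one can run the paper's duality argument at this single point. For (4), your explicit invocation of Enoki's theorem and case-by-case check that homologically trivial effective divisors give nef integration currents is in fact more detailed than the paper, which asserts this tersely; both arguments ultimately rest on the same structural input.
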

\begin{proof}
\begin{enumerate}
\item 
By arguing similarly to \cite[Section 2]{HarveyLawson} one gets the following facts: the operator $i\partial\bar\partial:\cE^{1,1}_{\R}(X)\to \cE^{2,2}_{\R}(X)$
has closed range since its cokernel is finite dimensional, \cite[Lemme 2]{SerreDualitate}, its dual 
$i\partial\bar\partial:{\cD'}^{0}_{\R}(X)\to{\cD'}^{1,1}_{\R}(X)$
 has closed range by the closed range theorem, \cite[IV 7.7]{Schaefer}, and thus the quotient topology induced by the projection $\pi:\{T\in{\cD'}_{\R}^{1,1}(X) \ | \ \d T=0\}\to H^{1,1}_{BC}(X,\R)$ on $H^{1,1}_{BC}(X,\R)$ is separated. Now the cone of closed positive currents is generated by the compact set $K:=\{ T\in {\cD'}_{\R}^{1,1}(X) \ | \ \d T=0, \ \langle T,\omega\rangle=1\}$, hence $\Psef_{BC}(X)$ is generated by its image $\pi(K)$ in $H^{1,1}_{BC}(X,\R)$. This image is compact and does not contain $0$. Thus $\Psef_{BC}(X)$ is closed in $H^{1,1}_{BC}(X,\R)$. The same argument shows that $\Nef_{BC}(X)$ is closed as well.
\item 
Let us denote the cone $\{\alpha\in H^{1,1}_{BC}(X,\R) \ | \ \forall \epsilon>0 \ \exists \eta_{\epsilon}\in \alpha\cap\cE_{\R}^{1,1}(X) \ \eta_{\epsilon}\ge-\epsilon\omega\}$ by $P_{nef}(X)$. 
The inclusion $\Nef_{BC}(X)\supset P_{nef}(X)$ is proved in 
\cite[Proposition 4.1]{LamariJMPA}. We show the second assertion by duality. 
Let $T$ be a closed nef current on $X$. 
By \cite[Th\'eor\`eme 1.2]{LamariJMPA} the Bott-Chern cohomology class $\{T\}_{BC} $ is in $P_{nef}(X)$ if for all positive pluriharmonic currents $T'$ one has $\langle\{T\}_{BC},\{T'\}_{A}\rangle\ge0$, where $\{T'\}_{A}$ is the Aeppli cohomology class of $T'$. 
By Proposition \ref{prop:decomposition} the positive, $i\partial\bar\partial$-closed $(1,1)$-current $T'$ has a decomposition $T'=\sum_jc_j[E_j]+T'',$ where $c_j\geq 0$ are positive real numbers, $E_{j}$ are irreducible compact curves on $X$ and $T''$ is a nef current. 
The inequality $\langle\{T\}_{BC},\{T''\}_{A}\rangle\ge0$,  is a consequence of Lemma \ref{lem:intersection} below. 
Now if we write $T$ as a limit of Gauduchon forms, $T=\lim_{n\to\infty}\omega_{n}$,  and choose a smooth representative $\eta$ in the class $\{E\}$ of the integration current along a curve $E$, we get 
$\langle\{T\}_{BC},\{E\}_{A}\rangle=\langle\{T\}_{BC},\{\eta\}_{A}\rangle = T(\eta)=\lim_{n\to\infty}\int_{X}\omega_{n}\wedge\eta= \lim_{n\to\infty}\int_{E}\omega_{n}\ge0$. 
\item
It is proved in \cite[Th\'eor\`eme 7.1]{LamariJMPA} that $P_{nef}(X)=\R_{\ge0}\{\tau\}_{BC}$, so $\Nef_{BC}(X)=P_{nef}(X)=\R_{\ge0}\{\tau\}_{BC}$.
\item
Let $T$ be a positive closed current with nef class $\{T\}_{BC}$. Then as before $T$ has a decomposition $T=\sum_jc_j[E_j]+T',$ and this time $T'$  is closed and nef.  Both $T$ and $T'$ are thus $\d$-exact. This implies that
 $\sum_jc_j[E_j]$ is $\d$-exact as well. If $X$ is elliptic the sum $\sum_jc_jE_j$ may be infinite but it  is in any case nef. If $X$ is not elliptic,  the divisor $\sum_jc_jE_j$ on  $X$ is homologically trivial and the corresponding integration current is nef.
\item
This follows from \cite[Th\'eor\`eme 1.2]{LamariJMPA}.
\item
As in (1) (see also 
\cite[Lemma 6]{HarveyLawson}) one can see that the operators $p_{1}\circ\d:\cE^{1}_{\R}(X)\to \cE^{1,1}_{\R}(X)$ and  $p_{2}\circ\d: {\cD'}^{1}(X)\to {\cD'}^{1,1}_{\R}(X)$ have closed range, where $p_{1}:\cE^{2}_{\R}(X)\to\cE^{1,1}_{\R}(X)$ and $p_{2}:{\cD'}^{1,1}_{\R}(X)\to{\cD'}^{1,1}_{\R}(X)$ are the natural projections. Thus  the quotient topologies induced on $H^{1,1}_{A}(X,\R)$ both from the space of pluriharmonic forms and from the space of pluriharmonic currents are separated. It follows that  $G$ is open and that if $T=\lim_{n\to\infty} \omega_{n}$ is a weak limit of Gauduchon forms, then $\{T\}_{A}=\lim_{n\to\infty}\{\omega_{n}\}_{A}\in \overline G$ hence $\overline{\Nef_{A}(X)}=\overline G$.
\item
This assertion is a consequence of \cite[Proposition 4.3]{LamariJMPA}. Note however that in loc. cit. one needs to take the closure of $\Psef_{A}(X)$, see also Remark \ref{rem:nonclosed}.
\item
We denote the set $\{\alpha\in H^{1,1}_{A}(X,\R) \ | \ \forall \epsilon>0 \ \exists \eta_{\epsilon}\in \alpha\cap\cE_{\R}^{1,1}(X) \ \eta_{\epsilon}>-\epsilon\omega\}$ by $\Pi_{nef}(X)$. Let $\alpha\in\Pi_{nef}(X)$ and let $\eta_{\epsilon}\in \alpha\cap\cE_{\R}^{1,1}(X)$ be such that $ \eta_{\epsilon}>-\epsilon\omega$. We set $\Omega_{\epsilon}:=\eta_{\epsilon}+\epsilon\omega$. Then the classes $\{\Omega_{\epsilon}\}_{A}=\alpha+\epsilon\{\omega\}_{A}$ are in $G$ and tend to $\alpha$ as $\epsilon$ tends to zero. Thus $\alpha\in\overline G= \overline{\Nef_{A}(X)}$ and $\Pi_{nef}(X)\subset \overline{\Nef_{A}(X)}$. Conversely, since we clearly have $G\subset \Pi_{nef}(X)$, we  get $\overline{\Nef_{A}(X)}\subset \overline {\Pi_{nef}(X)}$ and the desired equality of cones follows since $\Pi_{nef}(X)$ is closed, \cite[Lemma 2.3]{ChiRasSuv}.
\end{enumerate}
\end{proof}

\begin{Rem}\label{rem:nonclosed} The above proof cannot be mimicked to show closedness for $\Psef_{A}(X)$ and $\Nef_{A}(X)$ since there exist non-trivial $\d$-exact currents on $X$, hence the projection to $H^{1,1}_{A}(X,\R)$ of a corresponding generating compact  set of positive pluriharmonic currents will contain $0$. 

In fact, if $X$ is an Enoki surface with just one irreducible curve $C$, one can renormalize $\tau$ so that $\{\tau\}_{BC}=\{[C]\}_{BC}$ and one gets $\dim H^{1,1}_{BC}(X,\R)=\dim H^{1,1}_{A}(X,\R)=2 $,
$\Psef_{BC}(X)=\Nef_{BC}(X)=\R_{\ge0}\{\tau\}_{BC}$. 
By  Proposition \ref{prop:decomposition} it follows that any positive pluriharmonic current is nef. Moreover, if such a current vanishes on $\tau$, then it must be $\d$-exact by Proposition \ref{prop:pluriharmonic}. Hence we get
$$\Psef_{A}(X)=\Nef_{A}(X)=\{\alpha\in H^{1,1}_{A}(X,\R) \ | \ \langle\alpha,\{\tau\}_{BC}\rangle>0\}\cup\{0\}.$$ 
\end{Rem}

\begin{Lemma}\label{lem:intersection}
Let $T$, $T'$ be nef pluriharmonic $(1,1)$-currents on $X$ such that $T$ is $\d$-closed. Then for any sequences $(\omega_{n})_{n}$, $(\omega'_{n})_{n}$
of Gauduchon forms converging weakly to $T$ and to $T'$ respectively, we have:
$$\lim_{n,m\to\infty}\langle\omega_{n},\omega'_{m}\rangle=\langle\{T\}_{BC},\{T'\}_{A}\rangle.$$
\end{Lemma}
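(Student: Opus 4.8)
The plan is to show that the double limit $\lim_{n,m\to\infty}\langle\omega_n,\omega'_m\rangle$ exists and equals the pairing $\langle\{T\}_{BC},\{T'\}_A\rangle$ by interposing a smooth Bott--Chern representative of $\{T\}_{BC}$. Since $T$ is $\d$-closed and nef, pick a smooth $\d$-closed $(1,1)$-form $\alpha$ with $\{\alpha\}_{BC}=\{T\}_{BC}$, so that $T=\alpha+i\partial\bar\partial\varphi$ for a quasi-plurisubharmonic $\varphi$. First I would fix $m$ and let $n\to\infty$: because $\omega'_m$ is smooth and $\omega_n\to T$ weakly, $\langle\omega_n,\omega'_m\rangle\to\langle T,\omega'_m\rangle$. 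Dually, since $\omega_n\to T$ weakly with all terms $\d$-closed (Gauduchon forms on a surface are $\d$-closed? no --- they are only $i\partial\bar\partial$-closed), I must be careful here: what is true is that $\{\omega_n\}_{BC}\to\{T\}_{BC}$ in $H^{1,1}_{BC}(X,\R)$, which by Proposition~\ref{prop:psef-nef}(1),(3) is Hausdorff and one-dimensional, so $\{\omega_n\}_{BC}=\lambda_n\{\tau\}_{BC}$ with $\lambda_n\to\lambda$ where $\{T\}_{BC}=\lambda\{\tau\}_{BC}$; writing $\omega_n=\lambda_n\tau+i\partial\bar\partial\psi_n+(\text{the appropriate exact terms})$, the evaluation $\langle\omega_n,\omega'_m\rangle=\langle\{\omega_n\}_{BC},\{\omega'_m\}_A\rangle$ holds because $\omega'_m$ is $i\partial\bar\partial$-closed, and equals $\lambda_n\langle\{\tau\}_{BC},\{\omega'_m\}_A\rangle$, which tends to $\lambda\langle\{\tau\}_{BC},\{\omega'_m\}_A\rangle=\langle\{T\}_{BC},\{\omega'_m\}_A\rangle$ as $n\to\infty$, for each fixed $m$.

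Next I would let $m\to\infty$ in the resulting expression $\langle\{T\}_{BC},\{\omega'_m\}_A\rangle$. Since $\{\omega'_m\}_A\to\{T'\}_A$ in $H^{1,1}_A(X,\R)$ --- this uses that the quotient topology on $H^{1,1}_A(X,\R)$ is separated, established in the proof of Proposition~\ref{prop:psef-nef}(6) --- and since the Bott--Chern/Aeppli pairing is continuous and $\{T\}_{BC}$ is a fixed class, we get $\langle\{T\}_{BC},\{\omega'_m\}_A\rangle\to\langle\{T\}_{BC},\{T'\}_A\rangle$. So the iterated limit (first $n$, then $m$) equals the claimed value. To upgrade this to the genuine double limit, I would prove that $\langle\omega_n,\omega'_m\rangle-\langle\{T\}_{BC},\{\omega'_m\}_A\rangle\to0$ uniformly in $m$, equivalently $(\lambda_n-\lambda)\langle\{\tau\}_{BC},\{\omega'_m\}_A\rangle\to0$ uniformly in $m$; this needs a uniform bound on $\langle\{\tau\}_{BC},\{\omega'_m\}_A\rangle=\int_X\tau\wedge\omega'_m$, which follows because $\omega'_m\to T'$ weakly and $\tau$ is smooth, so $\int_X\tau\wedge\omega'_m\to\int_X\tau\wedge T'$, hence the sequence is bounded. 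Combined with $\lambda_n\to\lambda$, this gives the uniform estimate, and the double limit follows.

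The main obstacle is the bookkeeping in the identity $\langle\omega_n,\omega'_m\rangle=\langle\{\omega_n\}_{BC},\{\omega'_m\}_A\rangle$ and in reducing everything to the one-dimensional cone $\Nef_{BC}(X)=\R_{\ge0}\{\tau\}_{BC}$: one must check that the exact correction terms in the Bott--Chern representation of $\omega_n$ (the $\partial\bar\sigma+\bar\partial\sigma$ type terms appearing e.g.\ in \eqref{gauduchon}) pair to zero against the $i\partial\bar\partial$-closed current $\omega'_m$, which is exactly the statement that the Bott--Chern--Aeppli duality is well defined. Once that is in place, the two one-variable limits are routine weak-convergence arguments, and the uniformity needed to pass to the double limit is cheap given the boundedness of $\int_X\tau\wedge\omega'_m$. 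I do not expect any analytic difficulty beyond these topological-duality verifications, all of which are already available from Proposition~\ref{prop:psef-nef} and its proof.
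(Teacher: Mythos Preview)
There is a genuine gap, and you almost put your finger on it yourself. The identity $\langle\omega_n,\omega'_m\rangle=\langle\{\omega_n\}_{BC},\{\omega'_m\}_A\rangle$ is not available: the Gauduchon forms $\omega_n$ are only $i\partial\bar\partial$-closed, never $\d$-closed on a non-k\"ahlerian surface, so $\{\omega_n\}_{BC}$ is simply undefined. More to the point, your claim that the ``exact correction terms'' $\partial\bar\sigma_n+\bar\partial\sigma_n$ pair to zero against $\omega'_m$ is false. Bott--Chern/Aeppli duality says that $\int_X\eta\wedge(\partial\bar S+\bar\partial S)=0$ when $\eta$ is $\d$-closed; here the roles are reversed, and one would need $\omega'_m$ to be $\d$-closed to kill $\int_X(\partial\bar\sigma_n+\bar\partial\sigma_n)\wedge\omega'_m$. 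It is not, so this cross term survives and carries the whole difficulty of the lemma. (There is also a circularity: you invoke Proposition~\ref{prop:psef-nef}(3), whose proof relies on part (2), which in turn uses the present lemma.)

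The paper's proof handles exactly this cross term. Writing $\omega_n=\epsilon_n\omega+A_n+\bar\partial\sigma_n+\partial\bar\sigma_n$ and $\omega'_m=(\delta+\epsilon'_m)\omega+A'_m+\bar\partial\sigma'_m+\partial\bar\sigma'_m$ with $A_n,A'_m$ closed, one expands $\langle\omega_n,\omega'_m\rangle$ and is left with terms controlled by $\|\partial\sigma_n\|_{L^2}\cdot\|\partial\sigma'_m\|_{L^2}$ after integration by parts. The key analytic input, coming from the negativity of the intersection form on $H^{1,1}_{dR}(X,\R)$ and a computation of $\int_X(A_n)^2$, is that $\|\partial\sigma'_m\|_{L^2}$ stays bounded while $\|\partial\sigma_n\|_{L^2}\to0$ (the latter precisely because $T$ is $\d$-closed, so $\epsilon_n\to0$). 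This $L^2$ estimate is the missing idea in your proposal; without it the surviving cross terms cannot be shown to vanish in the double limit.
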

\begin{proof}
Let $\alpha_{1}, ..., \alpha_{n}$ be closed $(1,1)$-forms on $X$ whose classes generate $H^{1,1}_{dR}(X,\R)$ and such that $\int_{X}\alpha_{i}\wedge\alpha_{j}=-\delta_{ij}.$ Then in Aeppli cohomology $T'$ is cohomologous to some form $\delta\omega+A'$, where $A'=\sum_{j=1}^{n}a'_{j}\alpha_{j}$, $\delta, a_{j}\in\R$ and $\delta=\langle T',\tau\rangle\ge0$. Similarly $\omega'_{n}$ are cohomologous to some $(\delta+\epsilon'_{n})\omega+A'_{n}$, with $A'_{n}=\sum_{j=1}^{n}a'_{n,j}\alpha_{j}$. Evaluating on $\tau$ and on each $\alpha_{j}$ one obtains $\lim_{n\to\infty}\epsilon'_{n}=0$ and $\lim_{n\to\infty}a'_{n,j}=a'_{j}$. Thus 
$$\omega'_{n}=(\delta+\epsilon'_{n})\omega+A'_{n}+\bar\partial\sigma'_{n}+\partial\bar\sigma'_{n}$$ for some $(1,0)$-forms $\sigma'_{n}$. We have
$$0\ge\int_{X}(A'_{n})^{2}=\int_{X}  (A'_{n}+\d( \sigma'_{n}+\bar\sigma'_{n}))^{2}=  \int_{X}  (\omega'_{n}-(\delta+\epsilon'_{n})\omega+\partial \sigma'_{n}+\bar\partial\bar\sigma'_{n})^{2}=  $$
$$  \int_{X}  (\omega'_{n}-(\delta+\epsilon'_{n})\omega)^{2}+2\int_{X}\partial \sigma'_{n}\wedge\bar\partial\bar\sigma'_{n}= $$ 
$$\int_{X}  (\omega'_{n})^{2}-2\int_{X}(\delta+\epsilon'_{n})\omega\wedge \omega'_{n}+\int_{X}(\delta+\epsilon'_{n})^{2}\omega^{2} +2\parallel\partial \sigma'_{n}\parallel^{2}_{L^{2}},     $$
hence $\parallel\partial \sigma'_{n}\parallel^{2}_{L^{2}}\le \int_{X}(\delta+\epsilon'_{n})\omega\wedge \omega'_{n}$ and the right hand term tends to $\delta\langle T',\omega\rangle$ when $n$ tends to infinity. Thus the sequence $(\parallel\partial \sigma'_{n}\parallel_{L^{2}})_{n}$ is bounded. 

The same argument works for $T$ and this time we get 
$$\omega_{n}=\epsilon_{n}\omega+A_{n}+\bar\partial\sigma_{n}+\partial\bar\sigma_{n},$$
with $A_{n}=\sum_{j=1}^{n}a_{n,j}\alpha_{j}$,  $\lim_{n\to\infty}\epsilon_{n}=0$, $\lim_{n\to\infty}a_{n,j}=0$, and  $\lim_{n\to\infty}\|\partial\sigma'_{n}\|_{L^{2}}=0$.

Thus 
$$\lim_{n,m\to\infty}\langle\omega_{n},\omega'_{m}\rangle=\lim_{n,m\to\infty}\langle \epsilon_{n}\omega+A_{n}+\bar\partial\sigma_{n}+\partial\bar\sigma_{n}, (\delta+\epsilon'_{m})\omega+A'_{m}+\bar\partial\sigma'_{m}+\partial\bar\sigma'_{m}\rangle=$$
$$\lim_{n,m\to\infty}\langle \epsilon_{n}\omega+\bar\partial\sigma_{n}+\partial\bar\sigma_{n}, (\delta+\epsilon'_{m})\omega+\bar\partial\sigma'_{m}+\partial\bar\sigma'_{m}\rangle= $$
$$ \lim_{n,m\to\infty}\langle \epsilon_{n}\omega+\bar\partial\sigma_{n}+\partial\bar\sigma_{n}, (\delta+\epsilon'_{m})\omega\rangle+
\lim_{n,m\to\infty}\langle \epsilon_{n}\omega+\bar\partial\sigma_{n}+\partial\bar\sigma_{n}, \bar\partial\sigma'_{m}+\partial\bar\sigma'_{m}\rangle= $$
$$ \langle T,\delta\omega\rangle+\lim_{n,m\to\infty}\langle \epsilon_{n}\omega, \bar\partial\sigma'_{m}+\partial\bar\sigma'_{m}\rangle=\langle T,\delta\omega\rangle=\langle\{T\}_{BC},\{T'\}_{A}\rangle.$$
\end{proof}



%
%
%

 \hrule \medskip
\par\noindent

\end{document}